\definecolor{dg}{rgb}{0,0.5,0}		
\theoremstyle{theorem}
\newtheorem{Thm}{Theorem}[section]
\newtheorem{Cor}[Thm]{Corollary}
\newtheorem{Lem}[Thm]{Lemma}
\newtheorem{Prop}[Thm]{Proposition}
\theoremstyle{definition}
\newtheorem{Def}[Thm]{Definition}
\newtheorem{Rem}[Thm]{Remark}
\newtheorem{Ex}[Thm]{Example}
\newcommand{\affilit}[2]{\affil[#1]{\small{\textit{#2}}}}
\newcommand{\sR}{{\mathbb R}}
\newcommand{\B}{\mathcal{B}}
\newcommand{\vc}{\vcentcolon =}
\renewcommand{\bar}{\overline}		
\title{The max-type quasimetrics on probability simplices}
\author[1]{Micha{\l} Eckstein}
\author[2]{Tomasz Miller}
\author[1,3]{Karol \.Zyczkowski}
\begin{document}

\maketitle

\begin{abstract}
Quasimetric spaces form a natural framework to study distance problems with an inherent directional asymmetry. We introduce a simple novel class of quasimetrics on probability simplices, inspired by the Chebyshev distance. It is shown that such quasimetrics have expedient geometric properties --- they induce the Euclidean topology and a Finslerian infinitesimal structure, with which the probability simplices become geodesic spaces. Moreover, we prove that the broad family of the proposed quasimetrics are monotone under bistochastic maps.
\end{abstract}

Keywords: quasimetric space, Finsler manifold, geodesic space

MSC classes: 53B40, 	53B12, 51F99

\section{Quasimetric spaces}

In several practical problems, it is necessary to consider distances that are inherently directional. Simple examples include irreversible thermodynamic processes and the theory of training and learning. Estimating travel time while sailing with or against the wind or navigating through mountainous terrain provide an additional motivation. These problems require concepts from metric theory -- such as the triangle inequality, path concatenation, and Lipschitz control -- without imposing an artificial symmetry that the underlying phenomenon does not possess. This naturally leads to the idea of an `asymmetric metric', which was formalised in 1931 by W.A. Wilson~\cite{Wilson1931}.

\begin{Def}
\label{def_quasimetric}
A \emph{quasimetric} on a set $X$ is a map $D: X \times X \rightarrow \sR$ satisfying, for all $x,y,z \in X$,
\begin{enumerate}
\item[\textbf{Q1}] $D(x,y) \geq 0$ \quad (nonnegativity),
\item[\textbf{Q2}] $D(x,y) = 0 \ \Leftrightarrow \ x = y$ \quad (nondegeneracy),
\item[\textbf{Q3}] $D(x,z) \leq D(x,y) + D(y,z)$ \quad (triangle inequality).
\end{enumerate}
The pair $(X,D)$ is called a \emph{quasimetric space}\footnote{The quasimetric spaces have also been called `oriented metric spaces' \cite{Bodjanova1981,Reilly1984} and `spaces with weak metrics' \cite{Ribeiro1943}. The notion of `quasimetric' is also used in the literature in a different sense, where it stands for a symmetric nondegenerate nonnegative function satisfying a relaxed triangle inequality, see e.g. \cite{QuasiOther1,QuasiOther2}.}.
\end{Def}

Any quasimetric $D$ can be ``symmetrised'' into a bona fide metric that bounds it from above. More concretely, the map $(x,y) \mapsto \max\{D(x,y),D(y,x)\}$ as well as $(x,y) \mapsto \left( D(x,y)^r + D(y,x)^r \right)^{1/r}$ for any $r \in [1,\infty)$ are such metrics. Indeed, the above maps are nondegenerate by $\textbf{Q2}$ and manifestly symmetric, whereas the triangle inequality follows from $\textbf{Q3}$ and Minkowski's inequality.

Quasimetrics have multifarious uses and applications. They connect naturally with the concepts of asymmetric normed spaces \cite{Cobzas2013}, quasi-uniform spaces \cite{Fletcher1982} and Riemann--Finsler geometry \cite{BaoChernShen2000}. In the latter context, they were employed to grasp the unidirection of  time in relativistic physics \cite{Randers1941}. Other applications of quasimetrics include: directed shortest-path distances on digraphs \cite{BangJensenGutin2009,Hepworth2025}, minimum cost sequence of edit operations needed to change one string into another \cite{WagnerFischer1974,Navarro2001}, shortest urban transportation through one-way streets \cite{Melo2022} or subsumption relation in machine learning \cite{gutierrez2002}.


\bigskip

Let $\Delta_N \subset \sR^{N+1}$ denote the $N$-dimensional simplex, the points of which we shall interpret as probability distributions on an $(N+1)$-element space, i.e.,
\begin{align*}
\Delta_N \vc \left\{ P = (p_0,p_1,\ldots,p_N) \in [0,1]^{N+1} \ \big| \ \sum p_i = 1 \right\}.
\end{align*}

In this work we introduce and study the family of `\emph{max-type}' quasimetrics on $\Delta_N$ defined as
\begin{align}
\label{Dfdef}
D_f(P,Q) \vc \max_i \big( f(q_i) - f(p_i) \big),
\end{align}
where $f: [0,1] \rightarrow \sR$ is a continuous strictly increasing map. In fact, without loss of generality we can rescale $f$ and assume it is a continuous bijection onto $[0,1]$ (satisfying, in particular, $f(0) = 0$ and $f(1) = 1$).

Formula \eqref{Dfdef} can be seen as an asymmetric generalisation of the \emph{Chebyshev distance}
\begin{align}\label{Chebyshev}
d_\infty(P,Q) \vc \max_i | q_i - p_i |, 
\end{align} 
which measures distance under the uniform norm, picking up the worst-case coordinate difference. The Chebyshev distance is commonly used in logistics \cite{Langevin2005}, image processing \cite{Deborah2015}, pattern recognition \cite{Rodrigues2018} and data clustering \cite{Levy2025}.

The max-type quasimetrics \eqref{Dfdef}, albeit very simple, turn out to have many desirable properties (some of which require some additional, but rather mild assumptions on $f$). Although they give rise to the standard Euclidean topology on $\Delta_N$ (Section \ref{sec:2}, where we also recall the rudiments of the theory of quasimetric spaces), they possess the Finslerian infinitesimal structure and, moreover, $(\Delta_N, D_f)$ is a geodesic space (Section \ref{sec:3}). What is more, $D_f$ turns out to be monotone with respect to bistochastic maps for a broad choice of $f$ (Section \ref{sec:4}). It makes the max-type quasimetrics a potentially valuable tool in all domains where Chebyshev distances are employed and the problems exhibit a natural asymmetry.


\section{Basic properties}
\label{sec:2}

Recall that, due to the lack of symmetry quasimetric spaces are, in general, \emph{bitopological} \cite{Kelly1963}. There are two types of balls:
\begin{itemize}
\item the \emph{forward balls} $\B^+_x(r) \vc \{ y \in X \, | \, D(x,y) < r \}$,
\item the \emph{backward balls} $\B^-_x(r) \vc \{ y \in X \, | \, D(y,x) < r \}$,
\end{itemize}
along with their closed counterparts:
\begin{itemize}
\item the \emph{closed forward balls} $\bar{\B}^+_x(r) \vc \{ y \in X \, | \, D(x,y) \leq r \}$,
\item the \emph{closed backward balls} $\bar{\B}^-_x(r) \vc \{ y \in X \, | \, D(y,x) \leq r \}$.
\end{itemize}
The family of forward (resp. backward) balls provides a base of the so-called \emph{forward topology} (resp. \emph{backward topology}). In general, these two topologies are different, and consequently one has to distinguish between forward/backward convergence, forward/backward continuity, forward/backward compactness and so on \cite{quasi19}. Needless to say, this is very cumbersome and probably partially explains why quasimetrics are not so commonly used despite their naturalness. Luckily however, in case of the quasimetrics given by (\ref{Dfdef}), the forward and backward topologies coincide, as we shall see below. This motivates adopting the following additional axiom.
\begin{Def}
\label{unitop}
The quasimetric space $(X,D)$ is called \emph{unitopological} if the forward and backward topologies coincide, what can be expressed by the condition\footnote{Note that any quasimetric space is first-countable with respect to both forward and backward topologies, and so it is sufficient to use sequences and not nets when discussing various topological notions.}
\begin{enumerate}
\item[\textbf{Q4}] $\forall x \in X \ \forall (x_n) \subset X \quad D(x,x_n) \rightarrow 0 \ \Leftrightarrow \ D(x_n,x) \rightarrow 0$.
\end{enumerate}
\end{Def}

\begin{Prop}
\label{prop1}
$(\Delta_N, D_f)$ defined above is a unitopological quasimetric space. Its topology is the standard Euclidean topology.
\end{Prop}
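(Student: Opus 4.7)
The plan is to verify the quasimetric axioms \textbf{Q1}--\textbf{Q3} directly from the definition, and then to establish \textbf{Q4} together with the identification of the induced topology by comparing $D_f$ with the Chebyshev metric via the uniform continuity of $f$ and $f^{-1}$.

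The axioms \textbf{Q1} and \textbf{Q2} both rest on the monotonicity of $f$ combined with the simplex constraint: if $f(q_i) \le f(p_i)$ for every $i$, then $q_i \le p_i$ for every $i$, and because $\sum_i q_i = \sum_i p_i = 1$ this forces coordinatewise equality (so the maximum in \eqref{Dfdef} is always $\ge 0$, with equality iff $P=Q$). The triangle inequality \textbf{Q3} follows immediately from the elementary bound $\max_i(a_i + b_i) \le \max_i a_i + \max_i b_i$ applied to the decomposition $f(r_i) - f(p_i) = (f(r_i) - f(q_i)) + (f(q_i) - f(p_i))$.

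The heart of the argument is \textbf{Q4}. Because $f\colon [0,1] \to [0,1]$ is a continuous bijection between compact Hausdorff spaces it is a homeomorphism, so both $f$ and $f^{-1}$ are uniformly continuous and admit moduli of continuity $\omega_f$ and $\omega_{f^{-1}}$ with $\omega_f(\delta), \omega_{f^{-1}}(\delta) \to 0$ as $\delta \to 0^+$. Assume $D_f(P,Q) \le \epsilon$; then $f(q_i) - f(p_i) \le \epsilon$, hence $q_i - p_i \le \omega_{f^{-1}}(\epsilon)$ for every $i$. The crucial step exploits the simplex constraint $\sum_i(p_i - q_i) = 0$ to upgrade this one-sided bound into a two-sided one:
\[
\sum_{i\,:\,p_i > q_i}(p_i - q_i) \;=\; \sum_{i\,:\,q_i > p_i}(q_i - p_i) \;\le\; (N+1)\,\omega_{f^{-1}}(\epsilon),
\]
whence $p_i - q_i \le (N+1)\,\omega_{f^{-1}}(\epsilon)$ for every $i$, and consequently
\[
D_f(Q,P) \;=\; \max_i \bigl(f(p_i) - f(q_i)\bigr) \;\le\; \omega_f\bigl((N+1)\,\omega_{f^{-1}}(\epsilon)\bigr) \;\longrightarrow\; 0
\]
as $\epsilon \to 0$. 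Swapping the roles of $P$ and $Q$ gives the reverse implication. I expect this symmetrisation step --- which genuinely relies on the affine constraint defining $\Delta_N$ and would fail for $D_f$ taken on the cube $[0,1]^{N+1}$ --- to be the only nontrivial point of the proof.

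Finally, the same estimates, together with the trivial bound $\max_i|f(p_i) - f(q_i)| \le \omega_f(\max_i|p_i - q_i|)$, show that for any sequence $(P_n) \subset \Delta_N$, Euclidean convergence $P_n \to P$ is equivalent to $D_f(P, P_n) \to 0$ and equivalent to $D_f(P_n, P) \to 0$. Hence the common forward/backward $D_f$-topology coincides with the standard Euclidean topology inherited from $\R^{N+1}$.
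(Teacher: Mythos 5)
Your verification of \textbf{Q1}--\textbf{Q3} follows the paper's argument essentially verbatim: the combination of monotonicity of $f$ with the affine constraint $\sum_i(p_i-q_i)=0$ to get nonnegativity and nondegeneracy, and the subadditivity of $\max$ for the triangle inequality. The topology part, however, takes a genuinely different (and correct) route. The paper proves ``Euclidean $\subseteq$ forward/backward'' by the same direct estimate you use via continuity of $f$, but for the reverse inclusion it proceeds geometrically: it identifies the forward and backward balls explicitly as intersections of $\Delta_N$ with translated negative/positive orthants (equivalently, with scaled copies of $\Delta_N$), and then shows these shrink onto $P$ as $r\to 0^+$, so that any Euclidean ball around $P$ contains some $\B^\pm_P(r)$. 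You instead prove a quantitative two-sided bound: from $D_f(P,Q)\le\epsilon$ you obtain $q_i-p_i\le\omega_{f^{-1}}(\epsilon)$, and then exploit $\sum_i(p_i-q_i)=0$ to upgrade this to $|p_i-q_i|\le(N+1)\omega_{f^{-1}}(\epsilon)$, giving $D_f(Q,P)\le\omega_f\bigl((N+1)\omega_{f^{-1}}(\epsilon)\bigr)$ and $d_\infty(P,Q)\le(N+1)\omega_{f^{-1}}(\epsilon)$. This yields \textbf{Q4} and the equivalence of all three convergences at once, which suffices for topological equality because all topologies involved are first-countable (the paper notes this in a footnote; it would be worth making it explicit in your write-up). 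Your analytic argument is shorter and delivers an explicit modulus of equivalence; the paper's geometric argument is slightly longer but produces the orthant/simplex description of the balls, which the authors reuse in Remark~\ref{rem_balls} and the figures. Both are valid proofs.
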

\begin{proof}
Let us first directly prove that formula (\ref{Dfdef}) indeed defines a quasimetric. To prove nonnegativity and nondegeneracy, assume that for some $P,Q \in \Delta_N$ we have $\max_i(f(q_i) - f(p_i)) \leq 0$, which means that $f(q_i) \leq f(p_i)$ for all $i$. Since $f$ is a continuous increasing bijection, the same concerns its inverse $f^{-1}$ and hence we obtain $p_i - q_i \geq 0$ for all $i$. But we know that $\sum_i(p_i - q_i) = 0$, and therefore $p_i = q_i$ for all $i$. We thus conclude that $\max_i(f(q_i) - f(p_i))$ cannot be negative and it vanishes if and only if $P = Q$.

As for the triangle inequality, the proof is trivial. Indeed, for any $P,Q,R \in \Delta_N$
\begin{align*}
D_f(P,Q) & = \max_i(f(q_i) - f(p_i)) = \max_i(f(r_i) - f(p_i) + f(q_i) - f(r_i))
\\
& \leq \max_i(f(r_i) - f(p_i)) + \max_i(f(q_i) - f(r_i)) = D_f(P,R) + D_f(R,Q).
\end{align*}
This finishes the proof that $D_f$ is a quasimetric.
\medskip

Let now $d_E$ denote the Euclidean metric on the simplex  $\Delta_N$. It is not hard to see that $d_E(P_n,P) \rightarrow 0$ implies both $D_f(P,P_n) \rightarrow 0$ and $D_f(P_n,P) \rightarrow 0$. For example, one might simply estimate $D_f(P,P_n)$ and $D_f(P_n,P)$ from above by $\max_i|f(p_{n,i}) - f(p_i)|$ and then invoke the continuity of $f$. Hence the Euclidean topology is finer than the forward and backward topologies. 

In order to realise that it is also coarser, it is instructive to see that the forward and backward balls are given by
\begin{align}
\label{balls}
& \B^+_P(r) \vc \{ Q \in \Delta_N \ | \ \forall \, i \quad f(q_i) < f(p_i) + r \},
\\
\nonumber
& \B^-_P(r) \vc \{ Q \in \Delta_N \ | \ \forall \, i \quad f(q_i) > f(p_i) - r \},
\end{align}
To see what this means geometrically, for any fixed $P$ let us introduce the maps $r \mapsto p_i^\pm(r)$ by
\begin{align}
\label{ppm}
& p_i^+(r) \vc f^{-1}\left(\min\{f(p_i) + r, 1\}\right),
\\
\nonumber
& p_i^-(r) \vc f^{-1}\left(\max\{f(p_i) - r, 0\}\right),
\end{align}
which are continuous by the continuity of $f$ and $f^{-1}$, and therefore
\begin{align}
\label{clim}
\forall \, i \quad \lim_{r \rightarrow 0^+} p_i^\pm(r) = p_i.
\end{align}

 Eq. (\ref{balls}) can be rewritten  with help of symbols $p_i^\pm$,
\begin{align}
\label{balls2}
& \B^+_P(r) \vc \{ Q \in \Delta_N \ | \ \forall \, i \quad q_i < p_i^+(r) \} = \Delta_N \cap \prod_{i=0}^N \, (-\infty,p_i^+(r)),
\\
\nonumber
& \B^-_P(r) \vc \{ Q \in \Delta_N \ | \ \forall \, i \quad q_i > p_i^-(r) \} = \Delta_N \cap \prod_{i=0}^N \, (p_i^-(r),+\infty).
\end{align}
In other words, $\B^+_P(r)$ is the intersection of $\Delta_N$ with the \emph{negative} orthant whose vertex has been translated to the point $P^+(r) \vc (p_0^+(r),\ldots,p_N^+(r))$. Similarly, $\B^-_P(r)$ is an intersection of $\Delta_N$ with the \emph{positive} orthant, the vertex of which has been translated to $P^-(r) \vc (p_0^-(r),\ldots,p_N^-(r))$ -- see Fig. \ref{fig2}. 

In order to prove that the Euclidean topology is coarser than the forward and backward topologies, it suffices now to take any Euclidean ball $B \subset \sR^{N+1}$ and for any point $P \in B \cap \Delta_N$ find $r > 0$ such that $\B^\pm_P(r) \subset B \cap \Delta_N$. To this end, first take another Euclidean ball $\B^E_P(\rho) \subset B$ centered at $P$ and with the radius $\rho$ sufficiently small to ensure the inclusion. Then, let $H$ be an open $(N+1)$-dimensional cube inscribed in $\B^E_P(\rho)$ (equivalently, take $H \vc \B^\infty_P(\rho/\sqrt{N+1})$, that is a $d_\infty$-ball with a suitably smaller radius). In light of (\ref{balls2}) and the subsequent discussion, one now simply has to take $r$ small enough so as to ensure that $P^\pm(r) \in H$. But that can always be done on the strength of (\ref{clim}).
\end{proof}

\begin{figure}[h]
\begin{center}
\resizebox{240pt}{!}{\includegraphics[scale=1.2]{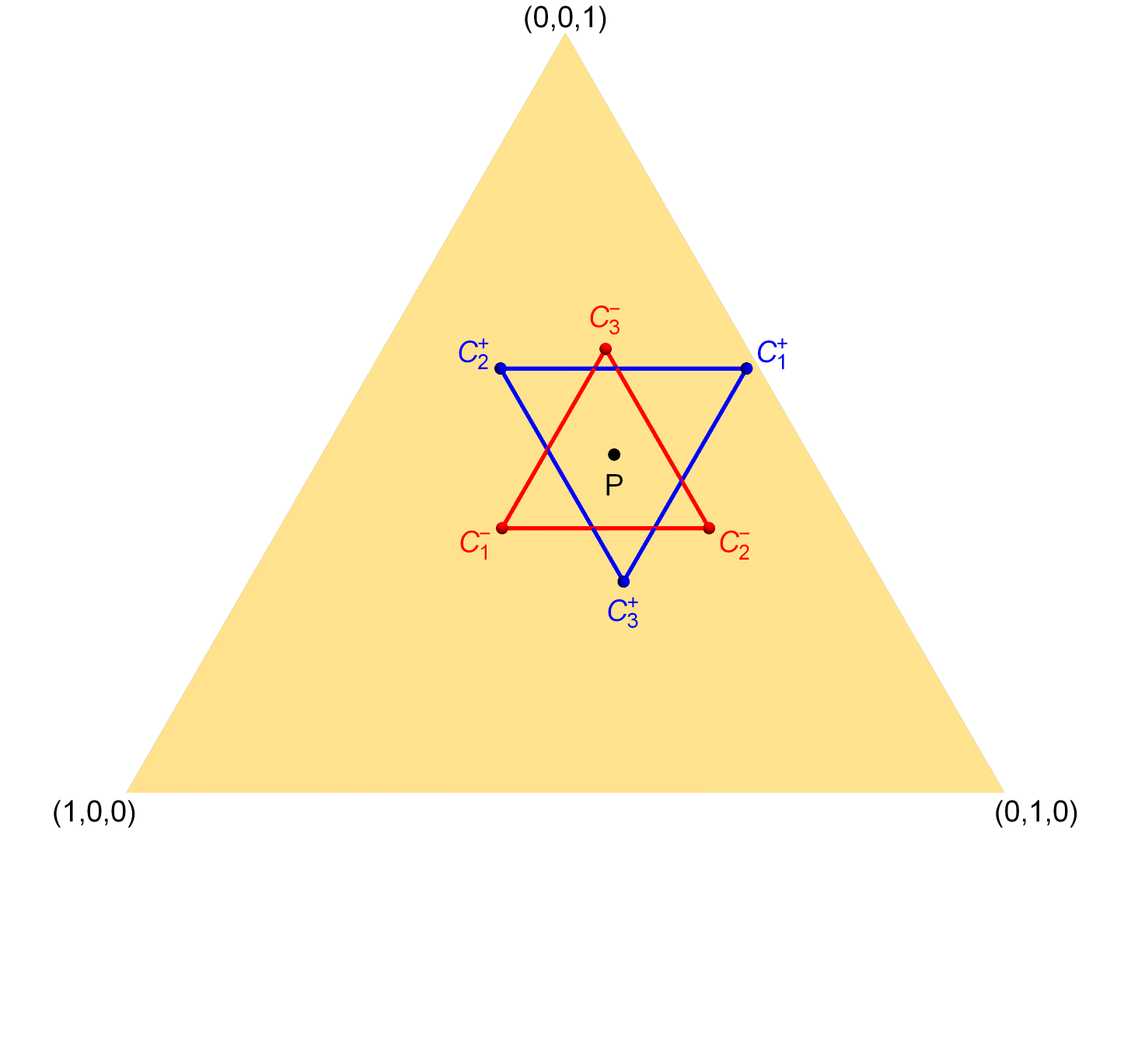}}
\caption{\label{fig1}The boundaries of certain forward (blue) and backward (red) quasimetric balls centered at $P = (2/9,1/3,4/9) \in \Delta_2$. Here $f(x) \vc x^{1/3}$, what explains why $P$ seems off-center and different apparent sizes of the balls. For the definition of $C^\pm_i$ see Remark \ref{rem_balls}.}
\end{center}
\end{figure}

\begin{figure}[h]
\begin{center}
\resizebox{240pt}{!}{\includegraphics[scale=1.2]{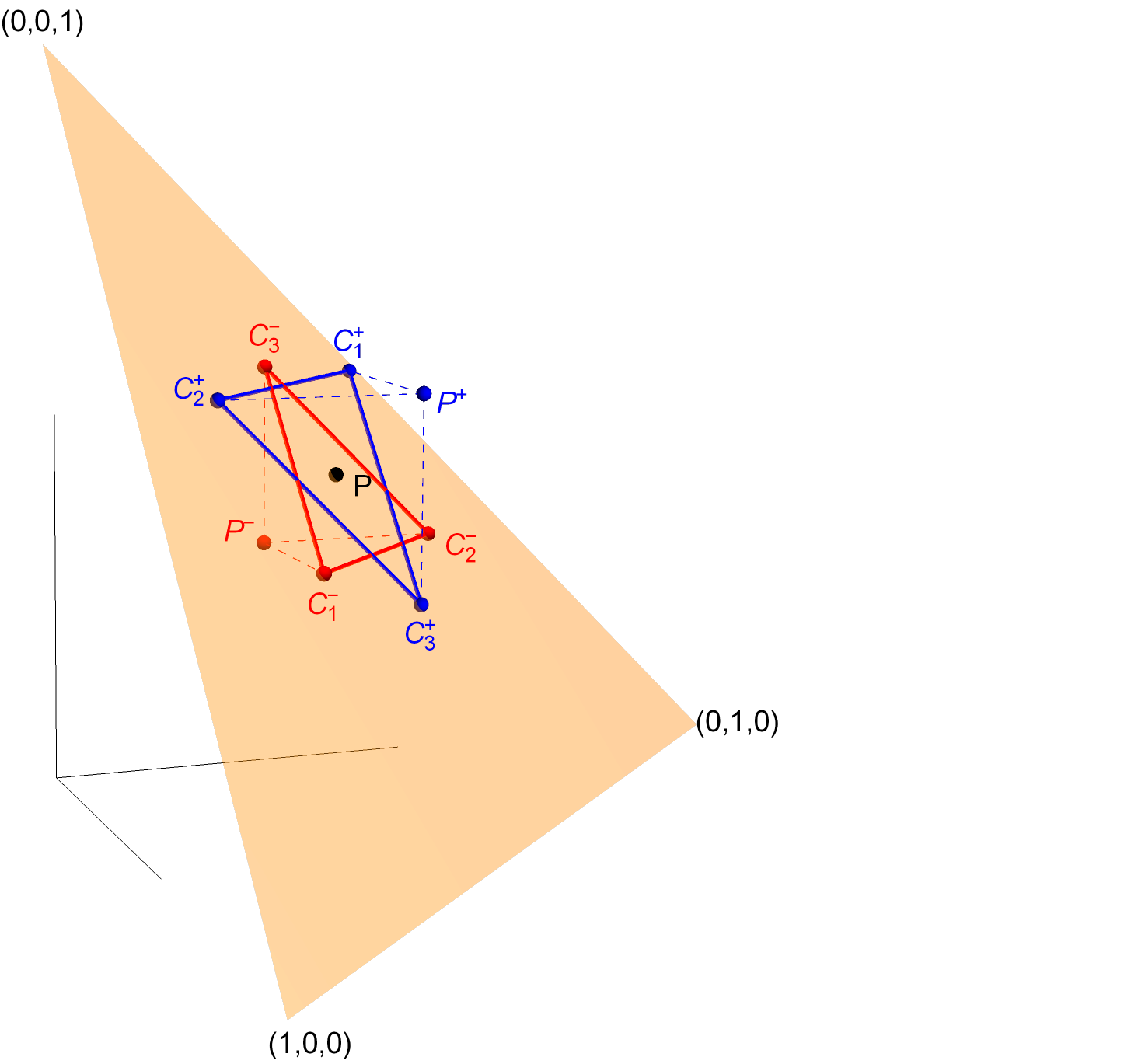}}
\caption{\label{fig2}The same (boundaries of) quasimetric balls as in Figure \ref{fig2}, only seen from the side, with the points $P^\pm \in \sR^3 \setminus \Delta_2$ visible.}
\end{center}
\end{figure}

\begin{Rem}
\label{rem_balls}
By (\ref{balls2}) and the subsequent discussion, it is not difficult to convince oneself that $\B^\pm_P(r)$ is an intersection of $\Delta_N$ with the interior of another $N$-simplex -- the convex closure of the set of points $C^\pm_0(r),C^\pm_1(r),\ldots,C^\pm_N(r)$ defined by
\begin{align*}
C^\pm_i \vc P^\pm + \left(1 - \sum\nolimits_j p_j^\pm\right)e_i,
\end{align*}
where $e_i$ is the $i$-th versor of $\sR^{N+1}$ and where we have suppressed the argument $r$ for legibility. These simplices' faces are parallel to the corresponding faces of $\Delta_N$. In fact, every backward ball $\B^-_P(r)$ is just a ``scaled-down'' version of $\Delta_N$, whereas every forward $\B^+_P(r)$ arises as an ``upside-down-scaled-down'' version of $\Delta_N$ (intersected with the latter). See Figures \ref{fig1} and \ref{fig2} for the illustration of the $\Delta_2$ case.
\end{Rem}

\begin{Ex}
Consider the simplest case when $f = \textnormal{id}$ and so $D_\textnormal{id}(P,Q) = \max_i(q_i - p_i)$. Then it is not difficult to demonstrate
 that
\begin{align*}
\frac{1}{N} d_\infty(P,Q) \leq D_\textnormal{id}(P,Q) \leq d_\infty(P,Q)
\end{align*}
with the left inequality saturated only for $Q = (\tfrac{1}{N+1},\ldots,\tfrac{1}{N+1})$ and $P$ a vertex. Such a quasimetric $D_f$ is thus bi-Lipschitz with respect to the Chebyshev distance (and thus to the Euclidean distance, too).

More generally, assume $f$ is locally bi-Lipschitz on $(0,1)$, i.e. if for any compact subinterval $[\alpha,\beta] \subset (0,1)$ there exists $L \geq 1$ such that $1/L|x-y| \leq |f(x) - f(y)| \leq L|x-y|$ for all $x,y \in [\alpha,\beta]$ (equivalently, assume that both $f$ and $f^{-1}$ are locally Lipschitz). Then for any $P \in \Delta^\circ_N$ and any $U \ni Q$ compactly supported in $\Delta^\circ_N$, one can find $L \geq 1$ such that
\begin{align*}
\frac{1}{L N} d_\infty(P,Q) \leq D_f(P,Q) \leq L d_\infty(P,Q)
\end{align*}
for any $Q \in U$.
\end{Ex}

Topologically, we are thus simply dealing with an Euclidean simplex. On the level of \emph{geometry}, however, the situation is more interesting, because (some of) the geometrical notions still enjoy the forward/backward distinction.

For example, one speaks about forward and backward Cauchy sequences as well as, consequently, about the forward and backward completeness -- see \cite[Definition 2.4]{quasi19} for the definitions.
\begin{Prop}
Quasimetric space $(\Delta_N,D_f)$ is both forward and backward complete.
\end{Prop}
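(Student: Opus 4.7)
The plan is to reduce both completeness assertions to the Euclidean compactness of $\Delta_N \subset \R^{N+1}$, exploiting the fact (Proposition \ref{prop1}) that the Euclidean topology coincides with both the forward and the backward topologies of $D_f$.

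Given a forward Cauchy sequence $(P_n) \subset \Delta_N$, Euclidean compactness delivers a subsequence $(P_{n_k})$ converging in $\R^{N+1}$ to some $P \in \Delta_N$. By Proposition \ref{prop1} together with the unitopological axiom \textbf{Q4}, this subsequential convergence translates to $D_f(P, P_{n_k}) \to 0$ (and symmetrically). A routine ``Cauchy plus convergent subsequence'' argument, using the triangle inequality \textbf{Q3} in the form
\[
D_f(P, P_n) \leq D_f(P, P_{n_{k_0}}) + D_f(P_{n_{k_0}}, P_n),
\]
with $k_0$ chosen so that $n_{k_0}$ exceeds the forward-Cauchy threshold and $D_f(P, P_{n_{k_0}})$ is sufficiently small, then upgrades this to $D_f(P, P_n) \to 0$ for the full sequence, and hence identifies $P$ as a forward limit.

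Backward completeness is established along the very same lines; one only interchanges the arguments of $D_f$ in the triangle inequality, writing $D_f(P_n, P) \leq D_f(P_n, P_{n_k}) + D_f(P_{n_k}, P)$ so that the remaining cross-term matches the direction in which backward-Cauchy-ness controls $D_f(P_m, P_n)$, in the convention of \cite{quasi19}. I do not foresee any genuine obstacle: Proposition \ref{prop1} and the compactness of $\Delta_N$ carry all the weight, and what remains is bookkeeping of indices to make sure each instance of the triangle inequality is oriented correctly. If one prefers a more abstract route, the same conclusion can be reached by observing that the symmetrised metric $D_f^s(P,Q) \vc \max\{D_f(P,Q), D_f(Q,P)\}$ induces the Euclidean topology and hence turns $(\Delta_N, D_f^s)$ into a compact --- therefore complete --- metric space, whose completeness transfers to $D_f$ through the elementary bound $D_f \leq D_f^s$.
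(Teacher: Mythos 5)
Your main argument is correct and is essentially the paper's approach: the paper simply invokes the general fact (cited from \cite{quasi19}) that forward/backward compactness implies forward/backward completeness, together with Euclidean compactness of $\Delta_N$, and your ``Cauchy sequence with a forward/backward-convergent subsequence converges'' argument is precisely the content of that cited fact, with the triangle inequality correctly oriented in each case. One caveat: the ``more abstract route'' you append at the end does not actually close the argument. The bound $D_f \leq D_f^s$ transfers $D_f^s$-Cauchyness to forward and backward $D_f$-Cauchyness, and $D_f^s$-convergence to forward and backward $D_f$-convergence, but that is the wrong direction for deducing completeness: a forward $D_f$-Cauchy sequence need not be $D_f^s$-Cauchy, since controlling $D_f(P_n,P_m)$ for $m\geq n$ says nothing a priori about $D_f(P_m,P_n)$. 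So completeness of $(\Delta_N,D_f^s)$ does not ``transfer'' to $(\Delta_N,D_f)$ by that bound alone; you would again have to pass through the subsequence argument. Stick with your first, explicit argument.
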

\begin{proof}
Similarly as in metric spaces, in quasimetric spaces forward (resp. backward) compactness implies forward (resp. backward) completeness \cite[Theorem 2.1]{quasi19}. And $\Delta_N$ \emph{is} (forward and backward) compact, because it is so in the Euclidean sense.
\end{proof}

The above Proposition implies that the notions of forward and backward Cauchyness of sequences coincide in our space. More interestingly, however, the notions of forward/backward lengths of continuous curves do \emph{not} coincide and, consequently, the same concerns the forward/backward geodesics. In what follows, let $(X,D)$ be any unitopological quasimetric space.
\begin{Def}
\label{def_Lf}
The \emph{forward length} (f-length) of a curve $\gamma: [a,b] \rightarrow X$ is
\begin{align}
\label{Lf}
L^+(\gamma) := \sup\limits_{\pi} \sum_{i=1}^n D(\gamma(t_{i-1}), \gamma(t_{i})),
\end{align}
where the supremum runs over all partitions $\pi = \{t_0,t_1,\ldots,t_n\}, a = t_0 < t_1 < \ldots < t_n = b$ of the interval $[a,b]$. Equivalently, one can express (\ref{Lf}) as a limit
\begin{align}
\label{Lf1}
L^+(\gamma) := \lim\limits_{|\pi| \rightarrow 0} \sum_{i=1}^n D(\gamma(t_{i-1}), \gamma(t_{i})),
\end{align}
where $|\pi| := \max_i (t_i - t_{i-1})$ is the \emph{modulus} (a.k.a. \emph{mesh}) of the partition $\pi$.

The \emph{backward length} (b-length) $L^-(\gamma)$ is defined analogously -- one simply flips the arguments of $D$ in formulas (\ref{Lf}) and (\ref{Lf1}). If $L^+(\gamma)$ (resp. $L^-(\gamma)$) is finite, we call $\gamma$ \emph{f-rectifiable} (resp. \emph{b-rectifiable}).
\end{Def}

The equality between (\ref{Lf}) and (\ref{Lf1}) is nontrivial. The proof can be found in \cite[Proposition 1.1.10]{papa} for the metric case, but it remains valid also for quasimetrics after few obvious modifications. In fact, many other basic properties of length remain valid for f-length and b-length with their proofs almost unchanged -- modifications due to the lack of symmetry turn out to be minor.
\begin{Prop}
\label{prop_reparam}
Fix a curve $\gamma: [a,b] \rightarrow X$ and let $\rho: [a',b'] \rightarrow [a,b]$ be a monotonous surjection (which is automatically continuous) serving as a reparametrization map. Then for $\tilde{\gamma} \vc \gamma \circ \rho: [a',b'] \rightarrow X$ we have
\begin{itemize}
\item $L^\pm(\tilde{\gamma}) = L^\pm(\gamma)$ if $\rho$ is nondecreasing,
\item $L^\pm(\tilde{\gamma}) = L^\mp(\gamma)$ if $\rho$ is nonincreasing.
\end{itemize}
\end{Prop}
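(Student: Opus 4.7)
The plan is to follow the classical metric-space reparametrization argument, taking care only where the asymmetry of $D$ matters (which it only does in the nonincreasing case, where the order of the arguments gets swapped). Throughout, I would rely crucially on axiom \textbf{Q2}: repeated points in a partition contribute $D(x,x)=0$ and can be deleted or inserted freely without affecting the Riemann-type sum.

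For the nondecreasing case, I would prove $L^+(\tilde\gamma)=L^+(\gamma)$ by two opposite inequalities, and then the analogous statement for $L^-$ is literally the same argument with the arguments of $D$ flipped. For $L^+(\tilde\gamma)\le L^+(\gamma)$: take any partition $\pi'=\{t_0'<\cdots<t_n'\}$ of $[a',b']$ and set $s_i\vc \rho(t_i')$. Since $\rho$ is nondecreasing with $\rho(a')=a$, $\rho(b')=b$, we get $a=s_0\le s_1\le\cdots\le s_n=b$, and
\begin{align*}
\sum_{i=1}^n D\bigl(\tilde\gamma(t_{i-1}'),\tilde\gamma(t_i')\bigr)=\sum_{i=1}^n D\bigl(\gamma(s_{i-1}),\gamma(s_i)\bigr).
\end{align*}
Deleting indices with $s_{i-1}=s_i$ (legitimate by \textbf{Q2}) yields a bona fide partition of $[a,b]$, hence the sum is bounded by $L^+(\gamma)$; taking supremum over $\pi'$ gives the inequality. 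For the reverse inequality, given any partition $\pi=\{s_0<\cdots<s_n\}$ of $[a,b]$, surjectivity of $\rho$ lets me pick $t_i'\in\rho^{-1}(s_i)$ with $t_0'=a'$, $t_n'=b'$ and $t_0'<\cdots<t_n'$ (the monotonicity of $\rho$ makes this choice consistent); the same identity as above then gives $\sum_i D(\gamma(s_{i-1}),\gamma(s_i))\le L^+(\tilde\gamma)$.

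For the nonincreasing case, the key observation is the order-reversal: given $\pi'=\{t_0'<\cdots<t_n'\}$ of $[a',b']$, the values $s_i\vc\rho(t_i')$ now satisfy $b=s_0\ge s_1\ge\cdots\ge s_n=a$. Writing $\sigma_j\vc s_{n-j}$ produces a weak partition of $[a,b]$ in the correct order, and
\begin{align*}
\sum_{i=1}^n D\bigl(\tilde\gamma(t_{i-1}'),\tilde\gamma(t_i')\bigr)=\sum_{i=1}^n D\bigl(\gamma(s_{i-1}),\gamma(s_i)\bigr)=\sum_{j=1}^n D\bigl(\gamma(\sigma_j),\gamma(\sigma_{j-1})\bigr).
\end{align*}
After dropping duplicates via \textbf{Q2}, the right-hand side is exactly a sum appearing in the supremum defining $L^-(\gamma)$. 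Hence $L^+(\tilde\gamma)\le L^-(\gamma)$, and the reverse direction is obtained again by pulling back any partition of $[a,b]$ via $\rho$ (possible by surjectivity) with the correct ordering. The identity $L^-(\tilde\gamma)=L^+(\gamma)$ is the mirror image, obtained by swapping the two arguments of $D$ everywhere.

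There is no real obstacle; the only point that requires any attention is that $\rho$ is merely a monotonous surjection, not a bijection, so the induced sequences $(s_i)$ are only weakly monotone and may have repetitions. This is precisely where \textbf{Q2} is used to collapse each repetition to a zero summand. Continuity of $\rho$ (automatic for a monotonous surjection between compact intervals) is needed to ensure $\tilde\gamma$ is itself a continuous curve so that Definition \ref{def_Lf} applies to it.
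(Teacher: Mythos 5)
Your proof is correct and follows exactly the same route as the paper's: the paper simply cites Proposition~1.1.8 of Papadopoulos, which is the classical metric-space reparametrization argument, and remarks elsewhere that the quasimetric adaptation requires only minor modifications; you have spelled out that adaptation explicitly (push-forward of partitions for one inequality, pull-back via surjectivity for the other, collapsing repeated nodes with \textbf{Q2}, and reversing the index order to turn the $L^+$ sum of $\tilde\gamma$ into an $L^-$ sum of $\gamma$ in the nonincreasing case).
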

\begin{proof}
Cf. \cite[Proposition 1.1.8]{papa}.
\end{proof}

\begin{Prop}
The f-length and b-length are additive, meaning that $L^\pm(\gamma_1 \sqcup \gamma_2) = L^\pm(\gamma_1) + L^\pm(\gamma_2)$, where $\sqcup$ denotes the concatenation operation.
\end{Prop}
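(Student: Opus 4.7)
The plan is to mimic the classical argument from metric geometry, which carries over essentially unchanged because the triangle inequality \textbf{Q3} does not depend on any symmetry of $D$. Using reparametrization invariance (Proposition~\ref{prop_reparam}), I may assume without loss of generality that $\gamma_1\colon[a,b]\to X$, $\gamma_2\colon[b,c]\to X$, and $\gamma\vc\gamma_1\sqcup\gamma_2\colon[a,c]\to X$ is obtained by concatenating them at $t=b$. I will establish both inequalities for $L^+$; the $L^-$ case is literally identical after swapping the arguments of $D$, since \textbf{Q3} in the form $D(z,y)\leq D(z,x)+D(x,y)$ is just \textbf{Q3} applied to the triple $(z,x,y)$.

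For the bound $L^+(\gamma_1)+L^+(\gamma_2)\leq L^+(\gamma)$, pick arbitrary partitions $\pi_1=\{a=t_0<\ldots<t_m=b\}$ of $[a,b]$ and $\pi_2=\{b=s_0<\ldots<s_n=c\}$ of $[b,c]$. Their union $\pi\vc\pi_1\cup\pi_2$ is a partition of $[a,c]$ and, because $\pi_1$ and $\pi_2$ meet at $t=b$ with no overlap, the corresponding partition sum for $\gamma$ is exactly the sum for $\pi_1$ evaluated on $\gamma_1$ plus the sum for $\pi_2$ evaluated on $\gamma_2$. Hence this quantity is bounded above by $L^+(\gamma)$, and taking the supremum first over $\pi_1$ and then over $\pi_2$ yields the claimed inequality.

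For the reverse bound, fix any partition $\pi=\{a=t_0<\ldots<t_k=c\}$ of $[a,c]$, and let $\pi'$ be the refinement obtained by inserting the point $b$ (the case $b\in\pi$ being trivial). If $j$ is the unique index with $t_{j-1}<b<t_j$, then \textbf{Q3} gives
\begin{equation*}
D(\gamma(t_{j-1}),\gamma(t_j))\leq D(\gamma(t_{j-1}),\gamma(b))+D(\gamma(b),\gamma(t_j)),
\end{equation*}
so the partition sum for $\pi$ is no greater than that for $\pi'$. But $\pi'$ decomposes into a partition of $[a,b]$ for $\gamma_1$ and a partition of $[b,c]$ for $\gamma_2$, so its sum is bounded by $L^+(\gamma_1)+L^+(\gamma_2)$. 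Taking the supremum over $\pi$ finishes the proof. The only point worth highlighting is precisely this refinement-monotonicity of the partition sum, which hinges solely on \textbf{Q3}; the asymmetry of $D$ is therefore harmless, and the same computation applied to the flipped quasimetric $(x,y)\mapsto D(y,x)$ handles $L^-$.
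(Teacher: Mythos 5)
Your proof is correct and is exactly the classical argument that the paper defers to by citing \cite[Proposition 1.1.12]{papa}: refine any partition of $[a,c]$ through the splice point $b$ using \textbf{Q3} to get refinement monotonicity, then decompose; and conversely glue partitions of the two halves. The observation that only \textbf{Q3} (not symmetry) is used is precisely the reason the metric proof ``remains valid with minor modifications,'' as the paper notes before the statement.
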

\begin{proof}
Cf. \cite[Proposition 1.1.12]{papa}.
\end{proof}

\begin{Prop}
For every f-rectifiable (resp. b-rectifiable) curve $\gamma: [a,b] \rightarrow X$, the map $t \mapsto L^+(\gamma|_{[a,t]})$ (resp. $t \mapsto L^-(\gamma|_{[a,t]})$) is increasing and continuous.
\end{Prop}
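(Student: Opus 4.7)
The plan is to derive monotonicity directly from additivity and then prove continuity by refining an almost-optimal partition, mirroring the metric argument in~\cite[Prop.~1.1.13]{papa} but keeping careful track of the direction of~$D$.

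Monotonicity is essentially automatic: for $a\leq s<t\leq b$, additivity gives $L^+(\gamma|_{[a,t]}) = L^+(\gamma|_{[a,s]}) + L^+(\gamma|_{[s,t]})$, and the second term is at least $D(\gamma(s),\gamma(t))\geq 0$ by~\textbf{Q1}. The statement for $L^-$ is identical after swapping arguments.

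For continuity at $t_0\in[a,b]$, additivity reduces matters to proving $L^+(\gamma|_{[t_0,t]})\to 0$ as $t\to t_0^+$ and, symmetrically, $L^+(\gamma|_{[t,t_0]})\to 0$ as $t\to t_0^-$. I would argue the first as follows. Fix $\varepsilon>0$ and, using f-rectifiability, pick a partition $\pi=\{t_0=s_0<s_1<\cdots<s_m=b\}$ of $[t_0,b]$ whose partition sum $\Sigma(\pi)\vc\sum_i D(\gamma(s_{i-1}),\gamma(s_i))$ exceeds $L^+(\gamma|_{[t_0,b]})-\varepsilon$. For $t\in(t_0,s_1)$, inserting $t$ produces a refinement $\pi'$ satisfying $\Sigma(\pi')\geq\Sigma(\pi)$ by~\textbf{Q3} applied to the triple $\gamma(t_0),\gamma(t),\gamma(s_1)$. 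The tail $\{t,s_1,\ldots,s_m\}$ is a partition of $[t,b]$, so the sup-definition~(\ref{Lf}) yields $L^+(\gamma|_{[t,b]}) \geq \Sigma(\pi')-D(\gamma(t_0),\gamma(t))$. Combining with the identity $L^+(\gamma|_{[t_0,b]})=L^+(\gamma|_{[t_0,t]})+L^+(\gamma|_{[t,b]})$ gives
\begin{align*}
L^+(\gamma|_{[t_0,t]}) \ \leq\ \varepsilon + D(\gamma(t_0),\gamma(t)).
\end{align*}
Since $\gamma$ is continuous and $X$ is unitopological (\textbf{Q4}), the residual term tends to $0$ as $t\to t_0^+$, so the $\limsup$ is at most $\varepsilon$; arbitrariness of $\varepsilon$ closes the claim. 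The left-continuity statement is the mirror image, obtained by refining an $\varepsilon$-optimal partition of $[a,t_0]$ and using that $D(\gamma(t),\gamma(t_0))\to 0$ as well.

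The only point requiring a little vigilance is the asymmetry of $D$. When inserting $t$ into $\pi$, one must apply the triangle inequality in the correct direction, and the leftover term in the resulting bound is $D(\gamma(t_0),\gamma(t))$, not its reverse; in the left-continuity argument the asymmetric roles are swapped and $D(\gamma(t),\gamma(t_0))$ appears instead. Both quantities are controlled by~\textbf{Q4} together with the continuity of $\gamma$, so the asymmetry is benign once the inequalities are written in the correct orientation—this is the only spot where the classical metric proof cannot simply be transcribed without a moment's thought.
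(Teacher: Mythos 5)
Your argument is correct and is precisely the ``minor modification'' of \cite[Proposition 1.1.13]{papa} that the paper invites the reader to carry out, including the one genuinely new point: right-continuity of $t\mapsto L^+(\gamma|_{[a,t]})$ uses $D(\gamma(t_0),\gamma(t))\to 0$ while left-continuity uses $D(\gamma(t),\gamma(t_0))\to 0$, and in a unitopological space both follow from continuity of $\gamma$. No gaps; this matches the intended proof.
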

\begin{proof}
Cf. \cite[Proposition 1.1.13]{papa}.
\end{proof}

The above fact together with Proposition \ref{prop_reparam} imply that every curve can be (re)parametrised by f-length and b-length (cf. \cite[Section 1.2]{papa}).
\begin{Def}
An f-rectifiable (resp. b-rectifiable) curve $\gamma: [a,b] \rightarrow X$ is \emph{parametrised by f-length} (resp. \emph{b-length}) if for all $s,t \in [a,b]$, $s \leq t$, we have $L^+(\gamma|_{[s,t]}) = t - s$ (resp. $L^-(\gamma|_{[s,t]}) = t - s$).
\end{Def}

By the triangle inequality, for any curve $\gamma: [a,b] \rightarrow M$ one has $L^+(\gamma) \geq D(\gamma(a),\gamma(b))$ as well as $L^-(\gamma) \geq D(\gamma(b),\gamma(a))$. Geodesics are nothing but curves parametrised by length that saturate the above inequalities. Usually, however, one defines them as below and only later provides the above-mentioned minimising property (cf. \cite[Section 2.2]{papa}).

\begin{Def}
Curve $\gamma: [a,b] \rightarrow X$ is a \emph{forward geodesic} (f-geodesic) if
\begin{align*}
\forall \, s,t \in [a,b] \quad s \leq t \ \Rightarrow \ D(\gamma(s),\gamma(t)) = t - s.
\end{align*}
Similarly, it is a \emph{backward geodesic} (b-geodesic) if
\begin{align*}
\forall \, s,t \in [a,b] \quad s \leq t \ \Rightarrow \ D(\gamma(t),\gamma(s)) = t - s.
\end{align*}
\end{Def}

\begin{Prop}
\label{geodesics_char}
Any f-geodesic is parametrised by f-length. On the other hand, let $\gamma: [a,b] \rightarrow X$ be a curve parametrised by f-length. The following conditions are equivalent:
\begin{enumerate}
\item[(i)] $\gamma$ is an f-geodesic.
\item[(ii)] $\forall s,t \in [a,b], \ s \leq t \qquad D(\gamma(a),\gamma(t)) = D(\gamma(a),\gamma(s)) + D(\gamma(s),\gamma(t))$.
\item[(iii)] $L^+(\gamma) = D(\gamma(a),\gamma(b))$.
\end{enumerate}
\end{Prop}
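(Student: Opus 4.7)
The plan is to establish the first sentence directly, then prove (i)$\Rightarrow$(ii)$\Rightarrow$(iii)$\Rightarrow$(i), leveraging the additivity and the triangle inequality established in the preceding propositions.

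For the opening claim, I would observe that if $\gamma$ is an f-geodesic, then for every partition $s = t_0 < t_1 < \ldots < t_n = t$ of any subinterval $[s,t] \subset [a,b]$ the sum telescopes:
\begin{align*}
\sum_{i=1}^n D(\gamma(t_{i-1}), \gamma(t_i)) = \sum_{i=1}^n (t_i - t_{i-1}) = t-s.
\end{align*}
So the supremum defining $L^+(\gamma|_{[s,t]})$ equals $t-s$, and $\gamma$ is parametrised by f-length.

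For the equivalence, assume henceforth that $\gamma$ is parametrised by f-length. The implication (i)$\Rightarrow$(ii) is then immediate: for $s \leq t$, both sides of the identity in (ii) reduce to $t-a = (s-a) + (t-s)$. For (ii)$\Rightarrow$(iii), I would iterate (ii) along an arbitrary partition $a = t_0 < t_1 < \ldots < t_n = b$ (applying (ii) first with $t = b$, $s = t_{n-1}$, then with $t = t_{n-1}$, $s = t_{n-2}$, and so on) to obtain
\begin{align*}
D(\gamma(a), \gamma(b)) = \sum_{i=1}^n D(\gamma(t_{i-1}), \gamma(t_i)).
\end{align*}
Taking the supremum over all partitions on the right yields $D(\gamma(a), \gamma(b)) = L^+(\gamma)$; note that one direction of this equality is in fact automatic from the triangle inequality.

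The one step that requires a little care is (iii)$\Rightarrow$(i). The key observation is that the hypothesis plus the f-length parametrisation give $D(\gamma(a), \gamma(b)) = b - a$. For arbitrary $s \leq t$ in $[a,b]$, the upper bound $D(\gamma(s), \gamma(t)) \leq L^+(\gamma|_{[s,t]}) = t - s$ is built into the definition of f-length. To squeeze out the matching lower bound, I would chain the triangle inequality through $\gamma(s)$ and $\gamma(t)$ and then dominate each of the outer two distances by the corresponding f-length on $[a,s]$ and $[t,b]$:
\begin{align*}
b - a = D(\gamma(a), \gamma(b)) \leq D(\gamma(a), \gamma(s)) + D(\gamma(s), \gamma(t)) + D(\gamma(t), \gamma(b))
\leq (s-a) + D(\gamma(s), \gamma(t)) + (b-t),
\end{align*}
which forces $D(\gamma(s), \gamma(t)) \geq t-s$. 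I expect this chaining argument to be the only nontrivial point; everything else is an essentially mechanical consequence of the definitions and the previously stated additivity of length.
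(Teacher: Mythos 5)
Your proof is correct and follows essentially the same route as the paper: the implications (i)$\Rightarrow$(ii)$\Rightarrow$(iii) are the same telescoping/partition arguments, and in (iii)$\Rightarrow$(i) you use the same triangle-inequality chain through $\gamma(s)$ and $\gamma(t)$ bounded by f-lengths of subintervals. The only cosmetic difference is that the paper phrases (iii)$\Rightarrow$(i) as ``the chain of inequalities $L^+(\gamma)\leq\cdots\leq L^+(\gamma)$ forces all steps to be equalities, giving $D(\gamma(s),\gamma(t))=L^+(\gamma|_{[s,t]})=t-s$,'' whereas you extract the lower bound $D(\gamma(s),\gamma(t))\geq t-s$ directly and pair it with the upper bound $D(\gamma(s),\gamma(t))\leq L^+(\gamma|_{[s,t]})=t-s$; these are logically the same argument. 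You also spell out the opening claim (any f-geodesic is parametrised by f-length) via the telescoping sum, which the paper leaves implicit, so if anything your write-up is slightly more complete.
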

\begin{proof}
$(i) \Rightarrow (ii)$ Take any $s,t \in [a,b]$, $s \leq t$ and observe that
\begin{align*}
D(\gamma(a),\gamma(t)) = t - a = s - a + t - s = D(\gamma(a),\gamma(s)) + D(\gamma(s),\gamma(t)).
\end{align*}

$(ii) \Rightarrow (iii)$ For any partition $\{t_i\}$ of $[a,b]$, on the strength of (ii) we obtain
\begin{align*}
\sum D(\gamma(t_{i-1}), \gamma(t_{i})) = D(\gamma(a),\gamma(b)).
\end{align*}
Taking the supremum over all partitions, we obtain (iii).

$(iii) \Rightarrow (i)$ Take any $s,t \in [a,b]$, $s \leq t$ and write (using the triangle inequality several times)
\begin{align*}
L^+(\gamma) = D(\gamma(a),\gamma(b)) & \leq D(\gamma(a),\gamma(s)) + D(\gamma(s),\gamma(t)) + D(\gamma(t),\gamma(b))
\\
& \leq D(\gamma(a),\gamma(s)) + L^+(\gamma|_{[s,t]}) + D(\gamma(t),\gamma(b))
\\
& \leq L^+(\gamma|_{[a,s]}) + L^+(\gamma|_{[s,t]}) + L^+(\gamma|_{[t,b]}) = L^+(\gamma),
\end{align*}
which means that all the above inequalities are in fact equalities. Hence $D(\gamma(s),\gamma(t)) = L^+(\gamma|_{[s,t]})$ and, since $\gamma$ is parametrised by f-length, the latter is equal to $t-s$. By the arbitrariness of $s,t$, this shows that $\gamma$ is an f-geodesic.
\end{proof}

\begin{Rem}
\label{rem_geodesics}
If the curve $\gamma: [a,b] \rightarrow X$ satisfies conditions (ii) or (iii) from the above proposition, then it becomes an f-geodesic after a suitable reparametrization. As such, it deserves the name \emph{f-pregeodesic}. The reparametrization map $\rho: [0, L^+(\gamma)] \rightarrow [a,b]$ is the inverse of the map $t \mapsto L^+(\gamma|_{[a,t]})$. The latter in this case can also be expressed as $t \mapsto D(\gamma(a),\gamma(t))$.

Needless to say, the above proposition has also its ``backward'' counterpart, featuring b-geodesics and b-length.
\end{Rem}

\section{Finslerian structure}
\label{sec:3}

In this section we show that $\Delta^\circ_N$, i.e. the interior of $\Delta_N$, possesses the structure of a Finsler manifold associated with the quasimetric $D_f$, provided $f \in C^1(0,1)$. We will see that --- under the additional condition that $f'$ is positive --- $D_f$ in fact \emph{arises} as a Finsler quasidistance.

First of all, let us recall some definitions and facts. In what follows, $X$ is a smooth manifold and $F: TX \rightarrow \sR_{\geq 0}$ is a Finsler function, i.e., a continuous function satisfying
\begin{itemize}
\item $\forall x \in X \ \forall v,w \in T_xX \quad F(v+w) \leq F(v) + F(w)$ (subadditivity)
\item $\forall \lambda > 0 \ \forall v \in TX \quad F(\lambda v) = \lambda F(v)$ (positive homogeneity)
\item $\forall v \in TX \quad F(v) = 0 \ \Rightarrow \ v = 0$ (nondegeneracy)
\end{itemize}
Note that we do \emph{not} assume that $F$ is smooth.

\begin{Def}
For any piecewise $C^1$ curve $\gamma: [a,b] \rightarrow X$ its \emph{Finsler length} is
\begin{align}
\label{ffinslerlength}
L_F(\gamma) := \int_a^b F(\dot{\gamma}(\tau))d\tau.
\end{align}
If the above integral is finite, $\gamma$ is called \emph{Finsler rectifiable}. Finsler length is invariant under positive reparametrization, and a Finsler rectifiable curve can always be \emph{(re)parametrised by Finsler length}, i.e., so as to satisfy $L_F(\gamma|_{[s,t]}) = t - s$ for any $s,t \in [a,b]$, $s < t$.

The \emph{Finsler quasidistance} between any $x,y \in X$ is
\begin{align}
\label{finslerdist}
D_F(x,y) := \inf \{L_F(\gamma) \ | \ \gamma \textrm{ -- piecewise } C^1 \textrm{ curve connecting } x \textrm{ with } y \}.
\end{align}

Finally, $\gamma$ is a \emph{Finsler geodesic} if it satisfies
\begin{align}
\label{finslergeodesic}
\forall \, s,t \in [a,b] \quad s \leq t \ \Rightarrow \ D_F(\gamma(s),\gamma(t)) = t - s.
\end{align}
\end{Def}

The pair $(X,D_F)$ is a unitopological quasimetric space (cf. \cite[(1.7)]{BM}), and the forward length associated to $D_F$ (via Definition \ref{def_Lf}) coincides with $L_F$ \cite[Theorem 4.2]{BM}\footnote{Bear in mind that the quasi-regularity requirement in \cite{BM} is automatically satisfied, because we \emph{define} $F$ to be subadditive, which is equivalent to the convexity of the indicatrix (cf. (2.1) in \cite{BM}), which in turn is equivalent to quasi-regularity (cf. paragraph just before Theorem 2.4 in \cite{BM}).}. This in particular means that Finsler geodesics enjoy the analogue of Proposition \ref{geodesics_char}. What is more, it therefore makes sense to talk about \emph{Finsler pregeodesics}, i.e. curves that satisfy conditions (ii) or (iii) and become geodesics after suitable reparametrization.

The following is a fundamental result for Finsler manifolds. 
\begin{Thm} \textup{\textbf{(Busemann--Mayer \cite[Theorem 4.1]{BM}\footnote{See the previous footnote. Cf. also \cite[Proposition 9.5.17]{SLK} for the smooth case.})}} The Finsler function $F: TX \rightarrow \sR_{\geq 0}$ can be retrieved from the Finsler quasidistance $D_F$ as the ``quasimetric derivative''
\begin{align}
\label{BM}
F(v) = \lim\limits_{t \rightarrow 0^+} \frac{D_F(\gamma(0),\gamma(t))}{t},
\end{align}
where $\gamma$ is any curve such that $\dot{\gamma}(0^+) = v$.
\end{Thm}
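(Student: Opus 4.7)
The plan is to establish (\ref{BM}) by proving separately that $\limsup_{t \to 0^+} D_F(\gamma(0),\gamma(t))/t \leq F(v)$ and $\liminf_{t \to 0^+} D_F(\gamma(0),\gamma(t))/t \geq F(v)$. The upper bound is immediate from the definition of $D_F$: the curve $\gamma$ itself is admissible in (\ref{finslerdist}), so $D_F(\gamma(0),\gamma(t)) \leq L_F(\gamma|_{[0,t]}) = \int_0^t F(\dot\gamma(s))\, ds$; by continuity of $F$ and $\dot\gamma(0^+) = v$ the integral average $\frac{1}{t}\int_0^t F(\dot\gamma(s))\, ds$ converges to $F(v)$ as $t \to 0^+$.

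For the lower bound, I would fix a coordinate chart identifying a neighborhood of $x_0 \vc \gamma(0)$ with an open ball in $\sR^n \cong T_{x_0}X$, so that $v \in \sR^n$. Three ingredients will be needed. First, combining continuity of $F$ on $TX$, positive homogeneity, and compactness of the unit sphere in $T_{x_0}X$, for any $\varepsilon > 0$ one can find a neighborhood $U \ni x_0$ on which $F_y(w) \geq (1-\varepsilon) F_{x_0}(w)$ for every $y \in U$ and every $w \in \sR^n$. Second, since $F_{x_0}$ is convex (by subadditivity) and positively homogeneous on $\sR^n$, Hahn--Banach produces a linear functional $\phi: \sR^n \to \sR$ with $\phi(v) = F_{x_0}(v) = F(v)$ and $\phi \leq F_{x_0}$ pointwise. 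Third, by continuity and nondegeneracy of $F$, there exists $c > 0$ such that $F_y(w) \geq c \norm{w}$ for all $y$ in some compact neighborhood of $x_0$ and all $w \in \sR^n$.

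Armed with these, consider any piecewise $C^1$ curve $\sigma: [0,1] \to X$ from $\gamma(0)$ to $\gamma(t)$. If its image lies in $U$, then
\[
L_F(\sigma) \geq (1-\varepsilon) \int_0^1 F_{x_0}(\dot\sigma(\tau))\, d\tau \geq (1-\varepsilon) \int_0^1 \phi(\dot\sigma(\tau))\, d\tau = (1-\varepsilon)\, \phi(\gamma(t) - \gamma(0)),
\]
by the fundamental theorem of calculus applied to $\phi \circ \sigma$. If instead $\sigma$ escapes $U$, the third ingredient gives $L_F(\sigma) \geq c \cdot d_E(x_0,\partial U)$, a fixed positive constant; such $\sigma$ are thus excluded from the infimum once $t$ is small enough that $t\, F(v) < c \cdot d_E(x_0,\partial U)$. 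Passing to the infimum over admissible $\sigma$ and dividing by $t$ yields
\[
\liminf_{t \to 0^+} \frac{D_F(\gamma(0),\gamma(t))}{t} \geq (1-\varepsilon)\, \phi(v) = (1-\varepsilon)\, F(v),
\]
where we used $(\gamma(t)-\gamma(0))/t \to v$. Sending $\varepsilon \to 0^+$ completes the argument.

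The main obstacle will be the uniform \emph{multiplicative} comparison $F_y \geq (1-\varepsilon) F_{x_0}$ of the first ingredient: only the combination of pointwise continuity of $F$ with its positive homogeneity and the compactness of the unit sphere in $T_{x_0}X$ upgrades continuity into a uniform-in-direction estimate. The Hahn--Banach step is by contrast robust and needs no smoothness or strict convexity of $F$; likewise, controlling curves that leave the chart is a standard soft-compactness argument once the third ingredient is in place.
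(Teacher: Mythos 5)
The paper does not prove this theorem at all; it is stated as a cited classical result (Busemann--Mayer \cite[Theorem 4.1]{BM}, with \cite[Proposition 9.5.17]{SLK} for the smooth case), so there is no in-paper proof to compare against. Your argument is, however, a correct and self-contained proof along the standard lines: the upper bound $\limsup_{t\to 0^+} D_F(\gamma(0),\gamma(t))/t \leq F(v)$ by testing the infimum in \eqref{finslerdist} against $\gamma$ itself, and the lower bound via a linear support functional $\phi \leq F_{x_0}$ with $\phi(v)=F(v)$ (existence by Hahn--Banach, since subadditivity plus positive homogeneity make $F_{x_0}$ sublinear), combined with the multiplicatively uniform local comparison $F_y \geq (1-\varepsilon)F_{x_0}$ and the escape estimate that discards competitors leaving the chart. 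The step you flag as the crux --- upgrading pointwise continuity of $F$ to the uniform-in-direction bound $F_y(w) \geq (1-\varepsilon)F_{x_0}(w)$ --- is indeed where nondegeneracy, continuity, homogeneity and compactness of the unit sphere all enter, and your sketch of it is sound. One small point worth making explicit: the telescoping identity $\int_0^1 \phi(\dot\sigma)\,d\tau = \phi(\sigma(1)-\sigma(0))$ relies on $\sigma$ being piecewise $C^1$, which is exactly the class over which $D_F$ is defined in \eqref{finslerdist}, so no generality is lost; similarly, for the upper bound one needs $\gamma$ itself to be piecewise $C^1$ near $0$, which is the natural reading of ``$\dot\gamma(0^+)=v$'' here.
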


Limit (\ref{BM}) provides an excellent way to find the \emph{unique candidate} for the Finsler function giving rise to a given quasimetric. Calculating the quasimetric derivative of $D_f$, one finds the following candidate for $F$ living on $T\Delta^\circ_N$,
\begin{align}
\label{finsler2}
F(v) \vc \max\limits_{i} f'(p_i)v_i
\end{align}
for any $v \in T_P\Delta^\circ_N$, where $v_i$ is the $i$-th Cartesian component\footnote{Recall that we regard $\Delta_N$ as the subset of $\sR^{N+1}$. As a side note, notice that $\sum_i v_i = 0$.} of $v$. In order to prove that $F$ indeed gives rise to $D_f$ (i.e. that $D_f = D_F$), we begin with the following lemma.
\begin{Lem}
\label{lemFinsler}
Let $f \in C^1(0,1)$ and fix a $C^1$ curve $\gamma: [a,b] \rightarrow \Delta^\circ_N$. For any $\varepsilon > 0$ one can find $\delta > 0$ such that for any $s,t \in [a,b]$, $0 < t - s < \delta$,
\begin{align}
\label{finsler3}
\Bigg| \frac{D_f(\gamma(s),\gamma(t))}{t-s} - \underbrace{\max\limits_i (f \circ \gamma_i)'(s)}_{= \, F(\dot{\gamma}(s))} \Bigg| \leq \varepsilon.
\end{align}
\end{Lem}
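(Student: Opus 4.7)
The plan is to unpack $D_f(\gamma(s),\gamma(t))$ componentwise via the mean value theorem and then compare to $\max_i (f\circ\gamma_i)'(s)$ by invoking the uniform continuity of the $N+1$ continuous functions $(f\circ\gamma_i)'$.

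First I would exploit compactness to promote $f\circ\gamma_i$ to a bona fide $C^1$ function on $[a,b]$. Since $\gamma$ is continuous on the compact $[a,b]$ with values in the open set $\Delta^\circ_N$, the image $\gamma([a,b])$ is compactly contained therein, so there exist $0<\alpha<\beta<1$ with $\gamma_i(\tau)\in[\alpha,\beta]$ for every $i\in\{0,\ldots,N\}$ and every $\tau\in[a,b]$. Combined with $f\in C^1(0,1)$, this guarantees that each $f\circ\gamma_i\in C^1([a,b])$ with $(f\circ\gamma_i)'(\tau)=f'(\gamma_i(\tau))\,\dot{\gamma}_i(\tau)$, so the $N+1$ derivatives $(f\circ\gamma_i)'$ are continuous, hence uniformly continuous, on $[a,b]$.

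Next, for any $s<t$ in $[a,b]$, the mean value theorem applied to each $f\circ\gamma_i$ furnishes $\xi_i=\xi_i(s,t)\in(s,t)$ with $f(\gamma_i(t))-f(\gamma_i(s))=(f\circ\gamma_i)'(\xi_i)(t-s)$. Dividing by $t-s$ and taking the maximum over $i$,
\[
\frac{D_f(\gamma(s),\gamma(t))}{t-s}=\max_i\,(f\circ\gamma_i)'(\xi_i).
\]
The elementary bound $|\max_i a_i-\max_i b_i|\leq\max_i|a_i-b_i|$ then gives
\[
\Bigl|\tfrac{D_f(\gamma(s),\gamma(t))}{t-s}-\max_i(f\circ\gamma_i)'(s)\Bigr|\leq\max_i\bigl|(f\circ\gamma_i)'(\xi_i)-(f\circ\gamma_i)'(s)\bigr|.
\]

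Finally, given $\varepsilon>0$, the uniform continuity of the finitely many $(f\circ\gamma_i)'$ supplies a single $\delta>0$ such that $|\tau-s|<\delta$ implies $|(f\circ\gamma_i)'(\tau)-(f\circ\gamma_i)'(s)|<\varepsilon$ for every $i$ and every $s,\tau\in[a,b]$. Since each $\xi_i\in(s,t)$ and $t-s<\delta$, the above right-hand side is bounded by $\varepsilon$, and \eqref{finsler3} follows. I do not foresee a genuine obstacle; the only conceptual subtlety is that the index realising the maximum on each side of \eqref{finsler3} may differ, which is precisely why the $\max$-Lipschitz inequality is invoked rather than pinning down a single dominant coordinate.
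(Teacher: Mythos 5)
Your proof is correct and takes essentially the same approach as the paper's: both hinge on the uniform continuity of $(f\circ\gamma_i)'$ on $[a,b]$, with you invoking the mean value theorem where the paper integrates the sandwich bound $(f\circ\gamma_i)'(s)\mp\varepsilon$ over $[s,t]$, a purely cosmetic difference. Your explicit preliminary step (that $\gamma([a,b])$ sits compactly in $\Delta^\circ_N$, so $f\circ\gamma_i\in C^1([a,b])$) is a small addition the paper leaves implicit, but it serves the same argument.
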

\begin{proof} Begin by noticing that for any $i$, the map $t \mapsto (f \circ \gamma_i)'(t) = f'(\gamma_i(t))\dot{\gamma_i}(t)$ is uniformly continuous (being continuous on a compact interval $[a,b]$), i.e.,
\begin{align}
\label{keyfact}
\forall \varepsilon > 0 \ \ \exists \delta > 0 \ \ \forall s,t \in [a,b] \quad \ |t-s| < \delta \ \Rightarrow \ |(f \circ \gamma_i)'(t) - (f \circ \gamma_i)'(s)| \leq \varepsilon.
\end{align}
Let us rewrite it as
\begin{align*}
(f \circ \gamma_i)'(s) - \varepsilon \leq (f \circ \gamma_i)'(\tau) \leq (f \circ \gamma_i)'(s) + \varepsilon
\end{align*}
for any $\tau \in [s,t]$, where we assume that $s < t$. Applying now $\int_s^t d\tau$, then $\max_i$ and finally dividing everything by $(t-s)$, we obtain
\begin{align*}
\max\limits_i (f \circ \gamma_i)'(s) - \varepsilon \leq \frac{D_f\bigr(\gamma(s),\gamma(t)\bigl)}{t-s} \leq \max\limits_i (f \circ \gamma_i)'(s) + \varepsilon,
\end{align*}
which is precisely (\ref{finsler3}).
\end{proof}

The above result guarantees the equality between the forward and Finsler lengths of piecewise $C^1$ curves.

\begin{Thm}
\label{thm_finslerineq}
Let $f \in C^1(0,1)$ and $F$ be the Finsler function given by (\ref{finsler2}). Then for any piecewise $C^1$ curve $\gamma: [a,b] \rightarrow \Delta_N^\circ$ we have that
\begin{align}
\label{finslerineq}
& D_f\bigr(\gamma(a),\gamma(b)\bigl) \leq D_F\bigr(\gamma(a),\gamma(b)\bigl) \leq L_F(\gamma) = L^+(\gamma).
\end{align}
\end{Thm}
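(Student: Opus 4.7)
The plan is to handle the three claims in the order that uses the least machinery first. The middle inequality $D_F(\gamma(a),\gamma(b)) \leq L_F(\gamma)$ is immediate from the definition (\ref{finslerdist}) of $D_F$ as an infimum over connecting curves, with $\gamma$ itself being an admissible candidate. The leftmost inequality will follow once the equality $L^+(\gamma) = L_F(\gamma)$ is established: I would pick any piecewise $C^1$ curve $\sigma$ connecting $\gamma(a)$ with $\gamma(b)$; the triangle inequality \textbf{Q3} applied to any partition of $\sigma$'s domain gives $D_f(\gamma(a),\gamma(b)) \leq \sum_i D_f(\sigma(t_{i-1}),\sigma(t_i))$, so taking the supremum over partitions and then the infimum over $\sigma$ yields $D_f(\gamma(a),\gamma(b)) \leq D_F(\gamma(a),\gamma(b))$.

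The remaining task is the equality $L_F(\gamma) = L^+(\gamma)$. I would first reduce to a single $C^1$ piece by invoking additivity of both lengths under concatenation, so that one may assume $\gamma \in C^1([a,b],\Delta_N^\circ)$. Then I would fix $\varepsilon > 0$, pick $\delta$ supplied by Lemma \ref{lemFinsler}, and apply the estimate (\ref{finsler3}) on every subinterval of a partition $\pi = \{a = t_0 < \ldots < t_n = b\}$ with $|\pi| < \delta$. Summing and multiplying through by the subinterval widths yields
$$\left| \sum_{i=1}^n D_f(\gamma(t_{i-1}),\gamma(t_i)) - \sum_{i=1}^n F(\dot{\gamma}(t_{i-1}))(t_i - t_{i-1}) \right| \leq \varepsilon(b-a).$$
The second sum is a left Riemann sum for $L_F(\gamma) = \int_a^b F(\dot{\gamma}(\tau))\,d\tau$, which converges to the integral because $\tau \mapsto \max_i (f \circ \gamma_i)'(\tau)$ is continuous on $[a,b]$. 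Passing to the limit $|\pi| \to 0$ in the expression (\ref{Lf1}) for $L^+(\gamma)$ and then sending $\varepsilon$ to zero delivers the equality.

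The main obstacle is the \emph{uniform} character of Lemma \ref{lemFinsler}: a single $\delta$ has to work for every subinterval of a sufficiently fine partition, not merely at each point $s$ separately. This is precisely what allows one to upgrade the pointwise Busemann--Mayer-style identity $F(\dot{\gamma}(s)) = \lim_{t \to s^+} D_f(\gamma(s),\gamma(t))/(t-s)$ into a genuine length equality. Once that is in hand, everything downstream reduces to standard Riemann-sum bookkeeping and the taking of an infimum over admissible connecting curves.
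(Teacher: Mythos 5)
Your proposal is correct and follows essentially the same route as the paper: the middle inequality is definitional, the equality $L^+(\gamma)=L_F(\gamma)$ is established via the uniform estimate of Lemma \ref{lemFinsler} on a fine partition and Riemann-sum convergence, and the leftmost inequality then follows by taking an infimum over connecting piecewise $C^1$ curves. The only cosmetic difference is that you reduce to a single $C^1$ piece by additivity, while the paper instead restricts to partitions respecting the breakpoints of $\gamma$.
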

\begin{proof}
First we prove the equality in (\ref{finslerineq}). Let $\pi = \{a=t_0,t_1,\ldots,t_N = b\}$ be any partition of the interval $[a,b]$ respecting the way $\gamma$ is cut into $C^1$ pieces, i.e. we assume that $\gamma|_{[t_{i-1},t_{i}]}$ is $C^1$ for $i=1,\ldots,N$. On the strength of Lemma \ref{lemFinsler}, for any $\varepsilon > 0$, if only we ensure that $|\pi| < \delta$, we can write
\begin{align*}
& \left| \sum_{i=1}^N D_f\bigl(\gamma(t_{i-1}), \gamma(t_{i})\bigr) - \sum_{i=1}^N F\bigl(\dot{\gamma}(t_{i-1})\bigr)(t_i - t_{i-1}) \right|
\\
& \leq \sum_{i=1}^N (t_i - t_{i-1}) \left| \frac{D_f\bigl(\gamma(t_{i-1}), \gamma(t_{i})\bigr)}{t_i - t_{i-1}} - F\bigl(\dot{\gamma}(t_{i-1})\bigr) \right| \leq \varepsilon \sum_{i=1}^N (t_i - t_{i-1}) = \varepsilon (b-a).
\end{align*}
By (\ref{Lf1}) and the arbitrariness of $\varepsilon$, this means that
\begin{align*}
L^+(\gamma) = \lim\limits_{|\pi| \rightarrow 0} \sum_{i=1}^N D_f\bigl(\gamma(t_{i-1}), \gamma(t_{i})\bigr) = \lim\limits_{|\pi| \rightarrow 0} \sum_{i=1}^N  F\bigl(\dot{\gamma}(t_{i-1})\bigr)(t_i - t_{i-1}) = \int_a^b F\bigl(\dot{\gamma}(\tau)\bigr)d\tau = L_F(\gamma),
\end{align*}
provided one (and hence both) limits exist. The Riemann integral exists because $F$ given by (\ref{finsler2}) is continuous.

In order to obtain the leftmost inequality (the other one follows trivially from the very definition of Finsler distance (\ref{finslerdist})), notice that
\begin{align*}
D_f\bigl(\gamma(a), \gamma(b)\bigr) \leq L^+(\gamma) = L_F(\gamma)
\end{align*}
Taking the infimum over all piecewise $C^1$ curves going from $\gamma(a)$ to $\gamma(b)$, we obtain that $D_f\bigl(\gamma(a), \gamma(b)\bigr) \leq D_F\bigl(\gamma(a), \gamma(b)\bigr)$.
\end{proof}

\begin{Cor}
\label{cor_finslerineq}
A piecewise $C^1$ curve $\gamma: [a,b] \rightarrow \Delta_N^\circ$ is parametrised by f-length iff it is parametrised by Finsler length.  Moreover, if $\gamma$ is an f-(pre)geodesic, then it is also a Finsler (pre)geodesic.
\end{Cor}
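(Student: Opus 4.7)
The plan is to assemble the corollary directly from Theorem \ref{thm_finslerineq} and Proposition \ref{geodesics_char} (together with its ``Finslerian counterpart'' invoked in the paragraph preceding Theorem \ref{thm_finslerineq}); no further analysis is needed.

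For the first claim, I would simply apply Theorem \ref{thm_finslerineq} to every restriction $\gamma|_{[s,t]}$ (which is still piecewise $C^1$). This gives $L^+(\gamma|_{[s,t]}) = L_F(\gamma|_{[s,t]})$ for all $s \leq t$. The definition of being parametrised by f-length (resp.\ Finsler length) asks that this common quantity equal $t-s$, so the two conditions are manifestly equivalent.

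For the second claim, suppose first that $\gamma$ is an f-pregeodesic. By condition (iii) in Proposition \ref{geodesics_char}, $L^+(\gamma) = D_f(\gamma(a),\gamma(b))$. Plugging this into the chain
\begin{align*}
D_f(\gamma(a),\gamma(b)) \leq D_F(\gamma(a),\gamma(b)) \leq L_F(\gamma) = L^+(\gamma)
\end{align*}
from Theorem \ref{thm_finslerineq} forces every inequality to be an equality; in particular $L_F(\gamma) = D_F(\gamma(a),\gamma(b))$. This is exactly condition (iii) of the Finslerian analogue of Proposition \ref{geodesics_char}, so $\gamma$ is a Finsler pregeodesic. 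If in addition $\gamma$ is an f-geodesic, then it is parametrised by f-length, hence (by the first part of the corollary) by Finsler length, and then the Finslerian analogue of Proposition \ref{geodesics_char} upgrades it from pregeodesic to genuine Finsler geodesic.

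There is really no serious obstacle: Theorem \ref{thm_finslerineq} has already done all the analytic work by equating $L^+$ with $L_F$, and Proposition \ref{geodesics_char} translates the length identity into the geodesic property on both sides. The only point that requires a word of care is that when one invokes the Finslerian counterpart of Proposition \ref{geodesics_char}, one must note that it applies to any unitopological quasimetric space; this is legitimate here because, as recalled before Theorem \ref{thm_finslerineq}, $(\Delta_N^\circ, D_F)$ is unitopological and its forward length coincides with $L_F$.
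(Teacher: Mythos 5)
Your proposal is correct and essentially mirrors the paper's own proof: both invoke Theorem \ref{thm_finslerineq} on restrictions $\gamma|_{[s,t]}$ to equate f-length with Finsler length, deduce the pregeodesic claim by forcing the chain of inequalities in \eqref{finslerineq} to collapse to equalities, and conclude the geodesic claim by appealing to the (iii) $\Rightarrow$ (i) direction of Proposition \ref{geodesics_char} (and its Finslerian analogue). The only cosmetic difference is that the paper additionally mentions a second, more direct route for the geodesic case (applying \eqref{finslerineq} to $\gamma|_{[s,t]}$ to read off $D_F(\gamma(s),\gamma(t)) = t-s$ immediately), which your write-up omits but does not need.
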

\begin{proof}
As for the statement regarding pregeodesics, observe that if $\gamma$ is an f-pregeodesic, then the leftmost and the rightmost terms in (\ref{finslerineq}) are equal and therefore also $D_F\bigl(\gamma(a),\gamma(b)\bigr) = L_F(\gamma)$.

As for the remaining statements, apply (\ref{finslerineq}) to $\gamma|_{[s,t]}$ for any $s,t \in [a,b]$ such that $s < t$. Then $L^+(\gamma|_{[s,t]}) = L_F(\gamma|_{[s,t]})$ and so being parametrised by f-length is the same as being parametrised by Finsler length. Finally, concerning geodesics, one can simply invoke Proposition \ref{geodesics_char}, (iii) $\Rightarrow$ (i). More directly, one can also notice that if $\gamma$ is an f-geodesic then both the leftmost and the rightmost terms in (\ref{finslerineq}) applied to $\gamma|_{[s,t]}$ are equal to $t-s$. Therefore, one has also that $D_F\bigl(\gamma(s),\gamma(t)\bigr) = t - s$.
\end{proof}


Now comes the main theorem of this section. We show that \emph{every} pair of points $P,Q \in \Delta_N$ can be connected by an f-geodesic (and hence also by a b-geodesic). Moreover, under some regularity assumptions on $f$, these curves turn out to be regular as well.
\begin{Thm}
\label{geodesics}
The quasimetric space $(\Delta_N, D_f)$ is geodesic. What is more, if $f \in C^m(0,1)$ for $m = 1,2,\ldots$ with $f'$ positive, then any $P,Q \in \Delta_N^\circ$ can be connected by a $C^m$ f-geodesic.
\end{Thm}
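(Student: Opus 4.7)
The plan is to construct an explicit $f$-pregeodesic $\gamma: [0,1] \to \Delta_N$ joining $P$ to $Q$, and then to reparametrise it by $f$-length into a bona fide $f$-geodesic via Remark \ref{rem_geodesics}; the $C^m$ regularity claim will follow from the implicit function theorem. Setting $a_i \vc f(q_i) - f(p_i)$ and $\lambda \vc \max_i a_i = D_f(P,Q)$, I may assume $P \neq Q$, so $\lambda > 0$. The same argument used in the nondegeneracy part of the proof of Proposition \ref{prop1} shows that $S^- \vc \{i : a_i < 0\}$ is nonempty.

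The guiding observation is that the pregeodesic speed bound $f(\gamma_i(t)) - f(\gamma_i(s)) \leq \lambda(t-s)$ for $s \leq t$ is \emph{one-sided}: any decrease of $f(\gamma_i)$ is unpenalised. I therefore let the components outside $S^-$ (for which $0 \leq a_i \leq \lambda$) advance at their natural $f$-linear pace, and spend the remaining freedom on slowing the ``descending'' components collectively via a single scalar reparametrisation. Concretely, set
\begin{align*}
\gamma_i(t) \vc \begin{cases} f^{-1}\bigl(f(p_i) + a_i\, t\bigr), & i \notin S^-, \\ f^{-1}\bigl(f(p_i) + a_i\, \phi(t)\bigr), & i \in S^-, \end{cases}
\end{align*}
where $\phi: [0,1] \to [0,1]$ is defined implicitly by the simplex constraint $\sum_i \gamma_i(t) = 1$. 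For each fixed $t$, the map $\phi \mapsto \sum_i \gamma_i(t)$ is continuous and strictly decreasing (since $a_i < 0$ on $S^-$ and $f^{-1}$ is increasing); and its values at $\phi = 0$ and $\phi = 1$ sandwich $1$, because for $i \notin S^-$ one has $\gamma_i(t) \in [p_i, q_i]$ while $\sum p_i = \sum q_i = 1$. The IVT yields a unique $\phi(t)$, and elementary monotonicity arguments make $\phi$ continuous and nondecreasing with $\phi(0)=0$, $\phi(1)=1$.

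With $\gamma$ in hand the pregeodesic property reduces to a one-line check: for $s \leq t$, components $i \notin S^-$ contribute $a_i(t-s) \leq \lambda(t-s)$, with equality for any index attaining the maximum, whereas components $i \in S^-$ contribute $a_i(\phi(t)-\phi(s)) \leq 0$. Therefore $D_f(\gamma(s),\gamma(t)) = \lambda(t-s)$, and summing over any partition of $[0,1]$ gives $L^+(\gamma) = \lambda = D_f(P,Q)$; condition (iii) of Proposition \ref{geodesics_char} thus holds, and Remark \ref{rem_geodesics} promotes $\gamma$, after reparametrisation by $f$-length, to an $f$-geodesic.

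For the regularity part, assume $P, Q \in \Delta_N^\circ$ and $f \in C^m(0,1)$ with $f' > 0$; then $f^{-1}$ is itself of class $C^m$ on $(0,1)$. The implicit equation for $\phi$ has $C^m$ data, and its derivative with respect to $\phi$, namely $\sum_{i \in S^-} a_i (f^{-1})'(f(p_i) + a_i \phi)$, is strictly negative; the implicit function theorem therefore yields $\phi \in C^m([0,1])$, and consequently $\gamma \in C^m$. Since $\gamma_i(t) \in [\min(p_i,q_i), \max(p_i,q_i)] \subset (0,1)$ by construction, the curve remains in $\Delta_N^\circ$. The hard part, in my view, is really pinpointing the construction itself: the obvious guesses---linear interpolation in the probability coordinates or in the $f$-coordinates---each fail, the former being adapted to $d_\infty$ rather than $D_f$, and the latter generically exiting $\Delta_N$. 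The asymmetry of $D_f$ saves the day by granting exactly one scalar degree of freedom per time slice, precisely what is needed to absorb the codimension-one simplex constraint.
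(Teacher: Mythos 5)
Your proof is correct and takes essentially the same approach as the paper: interpolate in the $f$-coordinates, define a single scalar reparametrisation implicitly by the simplex constraint, establish its monotonicity and continuity, verify the (pre)geodesic condition directly, and invoke the implicit function theorem for the $C^m$ regularity. The only variation is cosmetic: you slow down only the coordinates with $a_i < 0$ via $\phi$ while the rest advance $f$-linearly, whereas the paper's $\mu(t)$ modulates every coordinate with coefficient $a_j - r$ (so that only a $\lambda$-achieving coordinate stays $f$-linear) --- a different curve, but the same technique.
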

\begin{proof}
Take any $P,Q \in \Delta_N$ and denote $r \vc D_f(P,Q)$. Define the map $[0,r] \ni t \mapsto \mu(t)$ implicitly via
\begin{align}
\label{geodesic2}
\sum_j f^{-1}\big(f(p_j) + t + \mu(t)(f(q_j) - f(p_j) - r)\big) = 1.
\end{align}
That $\mu$ is well defined can be deduced from the intermediate value theorem applied to the map $[0,1] \ni x \mapsto \sum_{j} f^{-1}\big(f(p_j) + t + x(f(q_j) - f(p_j) - r)\big) - 1$. What is more, the map $\mu$ is strictly increasing, because if there existed $0 \leq s < t \leq r$ such that $\mu(s) \geq \mu(t)$, we would obtain, by the fact that $f$ and $f^{-1}$ are strictly increasing and $f(q_j) - f(p_j) \leq r$ for every $j$, that
\begin{align*}
1 & = \sum_{j} f^{-1}\big(f(p_j) + s + \mu(s)(f(q_j) - f(p_j) - r)\big)
\\
& > \sum_{j} f^{-1}\big(f(p_j) + t + \mu(t)(f(q_j) - f(p_j) - r)\big) = 1,
\end{align*}
which is absurd. 

It is not hard to realise that $\mu$ is continuous and that $\mu(0) = 0$ and $\mu(r) = 1$. We are thus in power to define a curve $\gamma: [0,r] \rightarrow \Delta_N$ connecting $P$ with $Q$ as
\begin{align}
\label{geodesic3}
\gamma_j(t) \vc f^{-1}\big(f(p_j) + t + \mu(t)(f(q_j) - f(p_j) - r)\big)
\end{align}
for every $j$, with (\ref{geodesic2}) ensuring that $\gamma(t)$ is indeed in $\Delta_N$. Furthermore, observe that for any $0 \leq s < t \leq r$ we have
\begin{align*}
D_f\bigl(\gamma(s),\gamma(t)\bigr) & = \max_j \left( f(\gamma_j(t)) - f (\gamma_j(s)) \right) = \max_j \left[ t - s + (\mu(t) - \mu(s))(f(q_j) - f(p_j) - r) \right]
\\
& = t - s + (\mu(t) - \mu(s))(D_f(P,Q) - r) = t-s,
\end{align*}
where along the way we have used the fact that $\mu(s) < \mu(t)$. We have thus demonstrated that $\gamma$ is an f-geodesic.

Let now $P,Q \in \Delta_N^\circ$, which causes the image of $\gamma$ defined via (\ref{geodesic3}) to be contained in $\Delta_N^\circ$ as well. For any $m \geq 1$, if $f \in C^m(0,1)$ with $f'$ positive, then by the implicit function theorem also $f^{-1}$ and $\mu$ are $C^m$. But this means that also $\gamma$ is $C^m$ on the entire $[0,r]$.

As a final side remark, notice that applying $d/dt$ to (\ref{geodesic2}) leads to
\begin{align*}
\mu'(t) = \frac{\sum_j\frac{1}{f'(\gamma_j(t))}}{\sum_j \frac{f(q_j) - f(p_j) - r}{f'(\gamma_j(t))}},
\end{align*}
which is positive and continuous, as expected.
\end{proof}

\begin{Cor}
\label{rownosc}
Assume $f \in C^1(0,1)$ with $f'$ positive and $F$ is the Finsler function given by (\ref{finsler2}). Then $D_f = D_F$ on $\Delta_N^\circ$.
\end{Cor}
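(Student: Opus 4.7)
The plan is to combine the chain of inequalities in Theorem \ref{thm_finslerineq} with the explicit construction of f-geodesics from Theorem \ref{geodesics}. One direction, namely $D_f \leq D_F$ on $\Delta_N^\circ$, is already furnished by the leftmost inequality in (\ref{finslerineq}) (together with the fact that any two points of $\Delta_N^\circ$ can be joined by a piecewise $C^1$ curve staying inside $\Delta_N^\circ$, e.g.\ the straight line segment). So the task reduces to proving the opposite inequality $D_F \leq D_f$.

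For this, I would fix $P,Q \in \Delta_N^\circ$ and invoke Theorem \ref{geodesics}: since $f \in C^1(0,1)$ with $f'$ positive, there is a $C^1$ f-geodesic $\gamma:[0,r] \to \Delta_N^\circ$ connecting $P$ to $Q$, where $r \vc D_f(P,Q)$. By Proposition \ref{geodesics_char}, every f-geodesic is parametrised by f-length and satisfies $L^+(\gamma) = D_f(\gamma(0),\gamma(r)) = D_f(P,Q)$. Applying the equality in (\ref{finslerineq}) to this particular curve, we get
\begin{align*}
L_F(\gamma) \eq L^+(\gamma) \eq D_f(P,Q).
\end{align*}
Since $\gamma$ is a piecewise (in fact everywhere) $C^1$ curve from $P$ to $Q$, the definition (\ref{finslerdist}) of the Finsler quasidistance yields
\begin{align*}
D_F(P,Q) \ \leq \ L_F(\gamma) \eq D_f(P,Q).
\end{align*}

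Combining this with the inequality $D_f(P,Q) \leq D_F(P,Q)$ already provided by Theorem \ref{thm_finslerineq}, we obtain $D_f(P,Q) = D_F(P,Q)$ for arbitrary $P,Q \in \Delta_N^\circ$. There is no real obstacle here; the whole argument is essentially bookkeeping, the substance having been done in Theorems \ref{thm_finslerineq} and \ref{geodesics}. The only subtlety worth double-checking is that the f-geodesic $\gamma$ produced in Theorem \ref{geodesics} genuinely remains inside $\Delta_N^\circ$ when $P,Q \in \Delta_N^\circ$ (so that it is a legitimate admissible curve in the infimum defining $D_F$), but this was already remarked upon in the proof of Theorem \ref{geodesics}.
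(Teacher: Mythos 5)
Your proof is correct and follows essentially the same route as the paper's: invoke Theorem \ref{geodesics} to obtain a $C^1$ f-geodesic $\gamma$ from $P$ to $Q$, then observe that for such $\gamma$ the chain (\ref{finslerineq}) collapses to equalities, giving $D_f(P,Q) = D_F(P,Q)$. You merely unpack in more detail than the paper why the inequalities saturate (via $L^+(\gamma) = D_f(P,Q)$ from Proposition \ref{geodesics_char}); the substance is identical.
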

\begin{proof}
Take any $P,Q \in \Delta^\circ_N$. By Theorem \ref{geodesics}, there exists a $C^1$ f-geodesic $\gamma$ connecting them. But for such $\gamma$ inequalities in (\ref{finslerineq}) become equalities and hence in particular $D_f(P,Q) = D_F(P,Q)$.
\end{proof}



The geodesics constructed in the proof of Theorem \ref{geodesics} usually are not the only geodesics connecting a given pair of points. In other words, the space $(\Delta_N, D_f)$ is not \emph{uniquely} geodesic, as illustrated by the following example.
\begin{Ex}
Let $Q = (1,0,\ldots,0)$. Then \emph{any} piecewise $C^1$ curve $\gamma$ connecting some $P \in \Delta_N$ with $Q$ is a pregeodesic, provided $\dot{\gamma}_i(t) < 0$ for $i=1,\ldots,N$ and for all $t$. If $P \in \Delta_N^\circ$, then there are infinitely many such curves. For example, consider the triangle $\Delta_2$ and all polygonal curves ending at the vertex $Q$, whose segments are at the angle no larger than $\pi/6$ from the altitude dropped at $Q$.
\end{Ex}

\section{Monotonicity}
\label{sec:4}

The natural mappings to consider on $\Delta_N$ are given by stochastic and bistochastic matrices. Recall that an $M$-by-$N$ matrix $S$ is \emph{stochastic} if $S_{ij} \geq 0$ and $\sum_{i=0}^M S_{ij} = 1$. Matrix $S$ is \emph{bistochastic} if it is a square stochastic matrix that additionally satisfies $\sum_{j=0}^N S_{ij} = 1$.

It is natural to ask how $D_f$ behaves under the action of (bi)stochastic matrices.

\begin{Thm}
Let $f \in C^1(0,1)$ with $1/f'$ positive and concave\footnote{Such functions include, e.g., power functions $x^\alpha$ with $\alpha \in (0,1]$, the log family $\tfrac{\ln(1+ax)}{\ln(1+a)}$ with $a>-1$ as well as the map $\tfrac{2}{\pi}\arcsin x$.}. Define $F: T\Delta^\circ_N \rightarrow \sR_{\geq 0}$ by (\ref{finsler2}), i.e. $F(v) \vc \max\limits_{i} f'(p_i)v_i$. Then
\begin{align}
\label{monotonicity1}
F(Sv) \leq F(v) 
\end{align}
for any bistochastic matrix $S$ and, as a consequence, for any $P,Q \in \Delta_N$ the monotonicity relation holds,
\begin{align}
\label{monotonicity2}
D_f(SP,SQ) \leq D_f(P,Q).
\end{align}
\end{Thm}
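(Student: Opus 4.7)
The proof naturally splits in two parts: first establish the infinitesimal inequality (\ref{monotonicity1}) via a Jensen argument on rows of $S$, and then integrate this along curves and invoke Corollary \ref{rownosc} to lift it to the quasimetric-level inequality (\ref{monotonicity2}).

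For the first part, the plan is to fix $P \in \Delta_N^\circ$ and $v \in T_P\Delta_N^\circ$, set $M := F(v) = \max_j f'(p_j)\, v_j$, and use this to obtain the pointwise bound $v_j \leq M / f'(p_j)$. Since row $i$ of a bistochastic $S$ has entries summing to $1$, each coordinate of $Sv$ satisfies
\[
(Sv)_i \; = \; \sum_j S_{ij}\, v_j \; \leq \; M \sum_j S_{ij} \cdot \tfrac{1}{f'(p_j)} \; \leq \; \tfrac{M}{f'\!\big(\sum_j S_{ij}\, p_j\big)} \; = \; \tfrac{M}{f'((SP)_i)},
\]
where the middle inequality is Jensen's inequality applied to the concave function $1/f'$ with the convex-combination weights $(S_{ij})_j$. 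Multiplying through by $f'((SP)_i)$ and taking the maximum over $i$ yields $F(Sv) \leq M = F(v)$. The column-sum condition $\sum_i S_{ij} = 1$ is simultaneously needed to guarantee $Sv \in T_{SP}\Delta_N^\circ$, so that the left-hand side is even defined.

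For the second part, I would first restrict to $P, Q \in \Delta_N^\circ$. Bistochasticity (via its column-sum part, together with positivity of the entries of $P$ and $Q$) keeps us inside $\Delta_N^\circ$, so by Corollary \ref{rownosc} the quasidistances $D_f$ and $D_F$ agree at both endpoints. For any piecewise $C^1$ curve $\gamma:[a,b] \to \Delta_N^\circ$ joining $P$ to $Q$, the pushed curve $S\gamma$ is piecewise $C^1$, joins $SP$ to $SQ$ inside $\Delta_N^\circ$, and has velocity $S\dot\gamma(\tau)$. Applying (\ref{monotonicity1}) pointwise and integrating over $[a,b]$ gives $L_F(S\gamma) \leq L_F(\gamma)$; taking the infimum over all such $\gamma$ yields $D_F(SP,SQ) \leq D_F(P,Q)$, i.e.\ (\ref{monotonicity2}). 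The boundary case $P$ or $Q \in \partial \Delta_N$ is then recovered by approximating with interior sequences and using the Euclidean continuity of $D_f$ (Proposition \ref{prop1}) together with the continuity of $S$.

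The main conceptual point — rather than a technical obstacle — is to recognise that bistochasticity is doing double duty: row sums drive Jensen's inequality in part one, while column sums guarantee that $S$ preserves both $\Delta_N$ and its tangent bundle. The concavity of $1/f'$ is then exactly the hypothesis Jensen demands — no more, no less. A completely alternative route that bypasses the Finsler machinery is to set $\eta := D_f(P,Q)$, observe $q_j \leq g(p_j)$ for $g(x) := f^{-1}(\min\{f(x) + \eta, 1\})$, verify by a short second-derivative computation that concavity of $1/f'$ forces $g$ to be concave, and then apply Jensen directly to $(SQ)_i = \sum_j S_{ij}\, q_j$; this variant avoids the continuity extension at the boundary and exposes the role of the hypothesis on $1/f'$ in a slightly different light.
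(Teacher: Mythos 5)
Your proof is correct, and for the infinitesimal inequality (\ref{monotonicity1}) it is actually \emph{cleaner} than the paper's. The paper first invokes the Birkhoff--von Neumann decomposition $S = \sum_j \lambda_j \Pi_{\sigma_j}$, replaces the velocity components $v_i$ by their positive parts $v_i^+$ to be able to use a max-of-ratios estimate $\frac{\sum a_j}{\sum b_j} \leq \max_j \frac{a_j}{b_j}$ (which needs nonnegative numerators), and then applies concavity of $1/f'$. Your argument bypasses all of this: from $M := F(v)$ you get the one-sided bound $v_j \leq M/f'(p_j)$ for every $j$ (valid regardless of the sign of $v_j$, since $M \geq 0$ because $\sum_i v_i = 0$), push it through row $i$ of $S$, and close with a single application of Jensen to the concave function $1/f'$ using the row-sum condition $\sum_j S_{ij} = 1$. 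No Birkhoff--von Neumann, no $v^+$ bookkeeping. You also correctly identify the division of labour between the row- and column-sum conditions (Jensen vs. preservation of $\Delta_N$ and $T\Delta_N$). The integration-plus-infimum step and the continuity extension to $\partial\Delta_N$ are the same as in the paper.

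One caveat on the ``alternative route'' you sketch: the claim that concavity of $1/f'$ forces $g(x) := f^{-1}(\min\{f(x)+\eta,1\})$ to be concave does hold, and the computation $g'' = h(g)\bigl(h'(g)-h'(x)\bigr)/h(x)^2$ with $h = 1/f'$ and $g(x)\geq x$ verifies it — but this presupposes $f \in C^2$, whereas the theorem only assumes $f \in C^1$. For merely $C^1$ data one would need a derivative-free argument for the concavity of $g$; so as stated the alternative is a valid proof under slightly stronger regularity, not a drop-in replacement. That said, it is an instructive variant that does make the geometric role of the hypothesis on $1/f'$ transparent and sidesteps the boundary approximation entirely.
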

\begin{proof}
By the Birkhoff--von Neumann theorem, any bistochastic matrix $S$ is a convex combination of permutation matrices, $S = \sum_{j=1}^k \lambda_j \Pi_{\sigma_j}$, where $\Pi_{\sigma_j}$ is the matrix corresponding to the permutation $\sigma_j$. We thus have, for any $v$ tangent at $P \in \Delta^\circ_N$
\begin{align*}
F(Sv) = \max_i f'\Big( \sum_{j=1}^k \lambda_j p_{\sigma_j(i)} \Big) \sum_{j=1}^k \lambda_j v_{\sigma_j(i)}
\end{align*}
Observe now that for any sequences $(a_n),(b_n)$ with $a_n \geq 0$ and $b_n > 0$, one has that
\begin{align}
\label{ineq}
\frac{\sum_{j=1}^k a_j}{\sum_{j=1}^k b_j} \leq \max_{j=1,\ldots,k} \frac{a_j}{b_j}
\end{align}
for any $k$, simply because $\sum_j a_j = \sum_j \tfrac{a_j}{b_j} b_j \leq \max_j \tfrac{a_j}{b_j} \sum_{j'} b_{j'}$.  

Using the notation $w^+ \vc \max\{w,0\}$, we can now write, for any $i$,
\begin{align*}
& f'\Big( \sum_j \lambda_j p_{\sigma_j(i)} \Big) \sum_j \lambda_j v_{\sigma_j(i)} \leq f'\Big( \sum_j \lambda_j p_{\sigma_j(i)} \Big) \sum_j \lambda_j v^+_{\sigma_j(i)}
\\
& = \frac{\sum_j \lambda_j v^+_{\sigma_j(i)}}{\frac{1}{f'( \sum_j \lambda_j p_{\sigma_j(i)} )}} \leq \frac{\sum_j \lambda_j v^+_{\sigma_j(i)}}{\sum_j \lambda_j \frac{1}{f'(p_{\sigma_j(i)} )}} \leq \max_{j=1,\ldots,k} \frac{\lambda_j v^+_{\sigma_j(i)}}{\lambda_j \frac{1}{f'(p_{\sigma_j(i)} )}} = \max_{j=1,\ldots,k} f'(p_{\sigma_j(i)}) v^+_{\sigma_j(i)},
\end{align*}
where the two inequalities in the second line follow from the concavity of $1/f'$ and from (\ref{ineq}), respectively. Taking now $\max_i$ on the both sides we obtain
\begin{align*}
F(Sv) \leq \max_{i=0,\ldots,N} \max_{j=1,\ldots,k} f'(p_{\sigma_j(i)}) v^+_{\sigma_j(i)} = \max_i f'(p_i) v^+_i = \max_i f'(p_i) v_i = F(v),
\end{align*}
where the penultimate equality holds simply because all $v_i$'s cannot be negative (recall that $\sum_i v_i = 0$). This concludes the proof of (\ref{monotonicity1}).

In order to prove (\ref{monotonicity2}), take any $P,Q \in \Delta_N^\circ$ and any piecewise $C^1$ curve $\gamma: [a,b] \rightarrow \Delta_N^\circ$ connecting them. Then of course $S\gamma$ connects $SP$ with $SQ$ and by (\ref{monotonicity1}) we have
\begin{align*}
L_F(S\gamma) = \int_a^b F(S\dot{\gamma}(\tau)) d\tau \leq \int_a^b F(\dot{\gamma}(\tau)) d\tau = L_F(\gamma).
\end{align*}
Taking infimum over all $\gamma$'s of class $C^1$ connecting $P$ and $Q$, we obtain $D_F(SP,SQ) \leq D_F(P,Q)$, which is the same as $D_f(SP,SQ) \leq D_f(P,Q)$ on the strength of Corollary \ref{rownosc}. The latter extends onto entire $\Delta_N$ by continuity.
\end{proof}

\begin{Rem}
Monotonicity relation (\ref{monotonicity2}) need not hold if matrix $S$ is stochastic but not bistochastic. Indeed, in the case $\Delta_2$ consider probability vectors $P = (1,0,0)$ and $Q = (0,\tfrac{1}{2},\tfrac{1}{2})$ along with the stochastic matrix
\begin{align*}
S = \left[ \begin{array}{ccc} 1 & 0 & 0 \\ 0 & 1 & 1 \\ 0 & 0 & 0 \end{array} \right].
\end{align*}
Then $SP = (1,0,0)$, $SQ = (0,1,0)$ and so 
\begin{align*}
D_f(SP,SQ) = 1 > f(\tfrac{1}{2}) = D_f(P,Q).
\end{align*}
\end{Rem}

\section*{Acknowledgements} 

ME and TM were supported by the National Science Centre in Poland under the research grant Sonata BIS (2023/50/E/ST2/00472).

\bibliographystyle{abbrv}
\bibliography{quasi}

\end{document}